\documentclass[pdftex]{amsart}

\usepackage{amsmath}
\usepackage{amssymb}
\usepackage{amscd}
\usepackage{epsfig}
\usepackage{xcolor}
\usepackage{setspace}
\usepackage{verbatim}
\usepackage{diagrams} 
\usepackage{hyperref}

\newtheorem{prop}{Proposition} 
\newtheorem{lem}{Lemma} 
\newtheorem{cor}{Corollary}
\newtheorem{thm}{Theorem}

\newtheorem{conj}{Conjecture}

\DeclareMathOperator{\OO}{\mathcal{O}}
\DeclareMathOperator{\PP}{\mathbb{P}}

\DeclareMathOperator{\K}{\mathcal{K}}
\DeclareMathOperator{\F}{\mathcal{F}}
\DeclareMathOperator{\G}{\mathcal{G}}
\DeclareMathOperator{\LL}{\mathcal{L}}
\DeclareMathOperator{\C}{\mathbb{C}}
\DeclareMathOperator{\Hom}{\mathrm{Hom}}

\DeclareMathOperator{\Fl}{\mathrm{Fl}}
\DeclareMathOperator{\supp}{\mathrm{supp}}
\DeclareMathOperator{\Quot}{\mathrm{Quot}}
\DeclareMathOperator{\Map}{\mathrm{Map}}

\DeclareMathOperator{\HH}{\mathcal{H}}

\begin{document}

\title{K-theoretic J-functions of Type A Flag Varieties}
\author{K. Taipale}

\begin{abstract} The J-function in Gromov-Witten theory is a generating function for one-point genus zero Gromov-Witten invariants with descendants. Here we give formulas for the quantum K-theoretic J-functions of type A flag manifolds and conjectural formulas for other types. Some K-theoretic tools for computation are also provided. As an application, we prove the quantum K-theoretic J-function version of the abelian-nonabelian correspondence for Grassmannians and products of projective space.
\end{abstract}

\maketitle

\tableofcontents
\section{Introduction and preliminaries}

The J-function of a space $X$ is a generating function for one-point genus zero Gromov-Witten invariants with descendants. The (cohomological) J-function was introduced by Givental in solving the quantum differential equation \[ \hbar \frac{\partial F}{\partial t_i} = \partial_i \star F,\] where $\hbar$ is a constant, the $t_i$ are coordinates for cohomology of $X$, and $\partial_i:=\frac{\partial}{\partial t_i}$ \cite{GivEq}. For us, the operation $\star$ refers to multiplication in the small quantum cohomology ring. In ``nice'' cases the J-function compactly encodes the information necessary for reconstruction of Gromov-Witten theory in genus zero (\cite{LP}). This makes the J-function useful for relating Gromov-Witten theories of different spaces. In 1998, Givental used this cohomological J-function to prove the Mirror Theorem (\cite{GToric}). In addition, the abelian-nonabelian correspondence can be phrased naturally using the J-function (\cite{BCK1}). 
% secret reference BCK1 for clear exp of qde

Quantum K-theory is a relatively underexplored extension of Gromov-Witten theory introduced by Givental and Lee (\cite{GL, LeeF,GWDVV}). (More recent foundational work can be found in \cite{TrueGiv}.) A K-theoretic Gromov-Witten invariant is the Euler characteristic of a sheaf obtained by pulling vector bundles from $X$ back to the moduli space of stable curves. Now the quantum differential equation solved looks like \[(1-e^{-\hbar}) \partial_{i} F = T_{i} \star F.\] Here the $T_i$ are classes in the Grothendieck group of $X$, rather than in the cohomology group, and $\star$ is quantum K-theoretic multiplication \cite{LeeF}. Recent work on quantum K-theory of Grassmannians can be found in \cite{BM, BCMP}.

This paper establishes K-theoretic J-functions for flag varieties of type A by using localization on Grothendieck's Quot scheme. The proofs use many techniques and results from Bertram, Ciocan-Fontanine, and Kim's work on the cohomological J-functions of flag varieties and Grassmannians and the abelian-nonabelian correspondence \cite{BCK1, BCK2}. The results extend Givental and Lee's ad hoc computations of the J-function for projective space in \cite{GL}. Along the way, we prove K-theoretic modifications of many cohomological tools (correspondence of residues, a pushforward lemma, and multiplicativity of the K-theoretic J-function). A K-theoretic J-function version of the abelian-nonabelian correspondence between Grassmannians and products of projective space is also proved.

\medskip

\paragraph{Acknowledgements}

I would like to thank my advisor Ionu\c{t} Ciocan-Fontanine for his mathematical generosity in introducing me to this area. He suggested the problem of the K-theoretic J-function to me, and many of these results appeared in my thesis.

\subsection{The J-function} \label{sec: jfunc}
In this paper we consider only homogeneous varieties. Definitions will be simplified with that in mind. In particular we need not consider virtual fundamental classes. 

The cohomological and K-theoretic J-functions of a homogeneous variety $X$ are defined as pushforwards of a residue on the graph space of stable maps to $X$ \cite{BAnother}. We describe the graph space first and the residue next, then discuss implications; see (\cite{GL}) for another description of the construction. 

The graph space $\overline{G}_{0,0}(X,\beta)$ is the space of stable maps $f:C \rightarrow X$, $C$ a genus zero curve with no marked points and image $f_*[C] = \beta \in H^*(X)$, with the requirement that $C$ has a distinguished parameterized component $C_0 \cong \PP^1$. (For flag varieties whose points are flags $V_1 \subset V_2 \subset \cdots V_{\ell}$, let $S_i$ denote the $i$th tautological bundle. Use the notation $d = (d_1, \ldots, d_{\ell})$ to represent the class in cohomology with degree $d_i$ intersection with $c_1(S_i^{\vee})$.) Recall that stable maps are at worst nodal.

Endow the distinguished parameterized component $C_0$ of $C$ with a $\C^*$-action. This lifts to a $\C^*$-action on the graph space and gives rise to $\C^*$-fixed loci in $\overline{G}_{0,0}(X,d)$ indexed by pairs $(d^+, d^-)$, $d^++d^- =d$. (Envision trees of $\PP^1$ concentrated on 0 and $\infty$ of the distinguished $\PP^1$.) The $\C^*$-fixed locus with the curve concentrated over $0 \in \PP^1$ is  $\overline{M}_{0,1}(X,d)$, the moduli space of degree $d$ stable maps with one marked point (the point at which the degree $d$ curve meets the distinguished $\PP^1$). In fact, we will think of $\hbar$ as the weight of the $\C^*$-action at zero in $\PP^1$. To simplify notation, let $M^X_d = \overline{M}_{0,1}(X,d)$ and $G^X_d = \overline{G}_{0,0}(X,d)$.

Let $ev$ denote the evaluation map taking the marked point $p_{1} $ on $C$ to $f(p_{1}) \in X$. Denote the cohomological Euler class of a vector bundle $N$ on $X$ by $e(N)$ and the K-theoretic Euler class by $\lambda_{-1}(N)$, and when necessary indicate $T$-equivariant Euler classes by a superscript $T$.

The cohomological and K-theoretic J-functions, then, are given by the following formal series: \begin{align*} J^X(t) &= \sum_{d=0}^{\infty} e^{dt} J_d^X(\hbar) \\ J^{X,K}(t) &= \sum_{d =0}^{\infty} e^{dt} J_d^{X,K}(e^{-\hbar}) \\ &= \sum_{d=0}^{\infty} Q^dJ_d^{X,K}(q).\end{align*} This version of the J-function, used in \cite{LP}, differs by a multiplicative factor from the version in \cite{BCK1, BCK2, GL}. Notice that we write $q = e^{-\hbar}$ and $Q^d= e^{d_1t_1 + d_2t_2 + \cdots +d_{\ell}t_{\ell}}$, where $(d_1, \ldots,  d_{\ell}) \in H_2(X)$. The coefficients of the series are the pushforwards mentioned earlier. The coefficients of the K-theoretic J-function are given by \begin{equation} J_d^{X,K} (e^{-\hbar}) = ev_* \left( \frac{[ \OO_{M^X_d}]}{\lambda_{-1}(N^{\vee}_{M^X_d/G^X_d})} \right).\end{equation} 

% In GL, they specify that ev_* is equivariant K-theoretic pushforward from K^* to K^*. Should I include this detaiL?
%givental formulated mirror conjecture, kim talked about homogeneous spaces, bertram followed up with ``Another way''

The cohomological J-function is a cohomology-valued generating function for one-point descendant Gromov-Witten invariants: \begin{displaymath} \int_X J^X_d \wedge \gamma = \sum_{a=0}^{\infty} \hbar^{-a-2} \langle \tau_a (\gamma) \rangle_d.\end{displaymath} Here $\gamma \in H^*(X)$ and $\langle \tau_a(\gamma) \rangle_d $ denotes the $a$th descendant Gromov-Witten invariant with degree $d$ \cite{CK}. This is a convergent series. In quantum K-theory, something similar is again true: \begin{align} \chi_X( J^{X,K}_d \otimes [\OO_{\gamma}]) &= \sum_{a=0}^{\infty} e^{-a \hbar} \chi(\overline{M}_{0,n}(X,d); [\OO_{\gamma}] \otimes L^a) \\ &=\sum_{a=0} q^a (\tau_k (\OO_{\gamma}))_{0,n,d}^K.\end{align} Here $L$ is the cotangent line bundle at the one marked point \cite{LP}. The J-function converges in the  $Q$-adic topology \cite{LeeF}.

We use localization to compute $J_d^{X,K}(q)$. $\PP^n_d$, the Drinfeld or quasimap compactification of $\mathrm{Map}_d(\PP^1,\PP^n)$, and Grothendieck's Quot scheme are alternative compactifications of $\mathrm{Map}_d(\PP^1,Gr(r,n))$ with more tractable fixed-point loci. Localization computations on $\overline{M}_{0,n}(X,d)$ transfer to $\PP^n_d$ or Quot via the correspondence of residues, described below. A $\C^*$-action on the rational curve $C$ induces a $\C^*$-action on any compactification of the map space. This $\C^*$ action will be used in localization computations.

\subsection{Quasimap spaces and the Quot scheme}

 $\PP^N_d$ is the quasimap compactification of $\mathrm{Map}_d(\PP^1,\PP^N)$. It is the space of $N+1$ bilinear forms $(f_0(x:y), \ldots, f_N(x:y))$. In general each $f_i$ is of degree $d$ and the $f_i$ are relatively prime, but to compactify the space we discard the condition that they be relatively prime. When $x^d$ is a common factor of all the $f_i$ we get a copy of $\PP^N$ that is fixed under the induced $\C^*$-action. 
 
The quasimap compactification of $\mathrm{Map}_d (\PP^1,Gr(r,n))$ space is obtained by looking at $Gr(r,n)$ embedded into $\PP^N$ via the Pl\"ucker embedding. Call this quasimap compactification $Gr(r,n)_d$. Then $Gr(r,n)$ is obtained as a $\C^*$-invariant locus of $Gr(r,n)_d$, just as $\PP^N$ is invariant in $\PP^N_d$.
 
 The quasimap space $\PP^N_d$ and the moduli spaces of stable maps to a projective variety $X \subset \PP^N$ are related by the following diagram:
\[\begin{CD}
G_d^{X} @>{\varphi}>> \PP^N_d \\
@AiAA   @AjAA \\
M_d^{X} @>{ev}>> X \\ 
\end{CD}.\]  

The quasimap space has rational singularities. The above relationship was exploited to get the cohomological and K-theoretic J-functions of projective space $\PP^n$.

The Quot scheme $\Quot_{\PP^1,d}(\C^n, n-r)$ is a smooth projective variety compactifying the map space $\mathrm{Map}_d(\PP^1, Gr(r,n))$, as long as $n > r$. It is the moduli space of degree $d$ quotient sheaves $Q$ with rank $n-r$: \[ 0 \rightarrow S \rightarrow \C^n \otimes \OO_{\PP^1} \rightarrow Q \rightarrow 0.\] $S$ is a locally free sheaf of rank $r$. The fixed point loci of the Quot scheme under various torus actions will be discussed in section \ref{sec: quot}.

We use orbifold pushforward to move the calculation of the residue to the Quot scheme. Since there is no map from the graph space to the Quot scheme, we rely on the fact that both the Quot scheme and the graph space have maps to the quasimap space. This aspect of the proof is discussed in section \ref{sec: simlem}.

\subsection{Localization and correspondence of residues}
 
Here we set notation and provide a brief overview of the localization techniques in K-theory used. An excellent introduction to equivariant K-theory can be found in Chapter 5 of Chriss and Ginzburg's book \cite{ChGz} and we direct the reader there for more.

 Symplectic and algebraic geometers have used localization in cohomology to obtain the Atiyah-Bott(-Berline-Vergne) residue theorem (\cite{EG, ES, Tu}) and the correspondence of residues \cite{LLY, BAnother}. Let $X$ be a nonsingular variety endowed with the action of a torus $(C^*)^n=T$. The ABBV formula allows us to shift integrals over $X$ to computations over $X^T$, the torus-fixed locus. A version of the ABBV formula for equivariant K-theory of smooth schemes is given in Chriss and Ginzburg (\cite{ChGz}) and virtual localization for K-theory on Deligne-Mumford stacks is discussed in \cite{RR3}. We present a version of the ABBV formula here to fix notation and justify later use.

The K-theoretic analogue of the Euler class of a bundle $N$ is \[\lambda_{-1}(N^{\vee}) = \sum (-1)^i \wedge^i N^{\vee} = 1- N^{\vee} + \wedge^2 N^{\vee} -... \] For $N$ an equivariant bundle, $\lambda_{-1}(N^{\vee})$ can also be considered equivariantly.

\begin{lem}[ABBV for K-theory, \cite{ChGz, RR3}] Let $X$ be a nonsingular scheme or smooth Deligne-Mumford stack endowed with the action of $T = (\C^*)^n$. Let $\alpha \in K(X)$ and let $i: W = \amalg W_k \hookrightarrow X$ be the inclusion of the components $W_k$ of $X^T$ into $X$, with restriction $i_k : W_k \rightarrow X$. Then \[\alpha = i_* \sum_k \frac{i_k^*(\alpha)}{\lambda_{-1}(N^{\vee}_{W_k/X})}.\]  \end{lem}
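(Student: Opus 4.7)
The plan is to follow the standard K-theoretic localization argument: pull both sides of the asserted identity back to the fixed locus $W$ via $i^\ast$, verify the equality there using the excess intersection formula, and then invoke a Thomason-type localization theorem to promote the equality on $W$ to one in the localized $T$-equivariant K-theory of $X$.

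Concretely, the first ingredient will be the K-theoretic self-intersection formula for the regular embedding $i_k \colon W_k \hookrightarrow X$ of smooth objects: for any $\beta \in K(W_k)$,
\[
i_k^\ast i_{k\ast} \beta = \lambda_{-1}(N^\vee_{W_k/X}) \cdot \beta,
\]
which follows from the Koszul resolution of $i_{k\ast}\OO_{W_k}$ on $X$ by wedge powers of the conormal bundle. Second, since distinct components of $X^T$ are disjoint closed substacks of $X$, base change gives $i_j^\ast i_{k\ast} = 0$ for $j \ne k$. Third, because $W_k$ is a connected component of the fixed locus, $N_{W_k/X}$ decomposes into $T$-eigenbundles with nontrivial characters $\chi$; hence $\lambda_{-1}(N^\vee_{W_k/X})$ becomes invertible after inverting each $1-\chi$ in the representation ring, and the right-hand side is a well-defined element of the localized equivariant K-theory. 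Combining these three points, I would compute
\[
i_k^\ast\Bigl(i_\ast \sum_j \frac{i_j^\ast(\alpha)}{\lambda_{-1}(N^\vee_{W_j/X})}\Bigr) = i_k^\ast i_{k\ast}\frac{i_k^\ast(\alpha)}{\lambda_{-1}(N^\vee_{W_k/X})} = \lambda_{-1}(N^\vee_{W_k/X}) \cdot \frac{i_k^\ast(\alpha)}{\lambda_{-1}(N^\vee_{W_k/X})} = i_k^\ast(\alpha),
\]
matching $i_k^\ast$ applied to the left-hand side, so that $i^\ast$ of the difference of the two sides vanishes on every component $W_k$.

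The final step, and the one I expect to be the main obstacle, is to upgrade this componentwise agreement to the desired equality in the (localized) equivariant K-theory of $X$. This requires Thomason's K-theoretic localization theorem, which asserts that $i^\ast$ becomes an isomorphism after inverting the characters appearing in the conormals to $X^T$; for schemes this is treated in \cite{ChGz}, and the smooth Deligne-Mumford stack version — where one must additionally keep track of automorphisms of geometric points — is the content of \cite{RR3}. Once this input is available, injectivity of $i^\ast$ after localization forces $\alpha - i_\ast \sum_k i_k^\ast(\alpha)/\lambda_{-1}(N^\vee_{W_k/X}) = 0$, completing the proof. Modulo the localization theorem itself, the argument is a purely formal manipulation with the excess intersection formula.
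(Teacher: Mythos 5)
The paper does not actually prove this lemma: it is stated with a citation to \cite{ChGz, RR3} and used as a black box, so there is no in-paper argument to compare against. Your proposal is the standard derivation and is correct in outline. The three ingredients you isolate are the right ones: the self-intersection formula $i_k^* i_{k*}\beta = \lambda_{-1}(N^{\vee}_{W_k/X})\cdot\beta$ (from the Koszul resolution of $\OO_{W_k}$), vanishing of $i_j^* i_{k*}$ for $j\neq k$ by disjointness of the components, and invertibility of $\lambda_{-1}(N^{\vee}_{W_k/X})$ after inverting the elements $1-\chi$ for the nontrivial characters $\chi$ occurring in the normal bundle. You are also right that the entire nonformal content is Thomason's localization theorem (that $i_*$, equivalently $i^*$, becomes an isomorphism on suitably localized equivariant K-theory), and deferring that to the references is exactly what the paper itself does. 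One point worth making explicit: as stated in the paper the identity with $\alpha\in K(X)$ is slightly abusive, since the right-hand side only lives in the localized ring where $\lambda_{-1}(N^{\vee}_{W_k/X})$ is invertible; your proof correctly works in that localization throughout, which is where the equality should be read.
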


Correspondence of residues allows us to compare localization contributions of corresponding torus-fixed loci in the Quot scheme and the quasimap space. Proofs for correspondence of residues for complex manifolds and orbifolds were given by Lian-Liu-Yau and Bertram (\cite{LLY, BAnother}). 

Consider $X$ and $Y$ smooth varieties (or smooth Deligne-Mumford stacks) endowed with a $T$-action, and $g: X \rightarrow Y$ a proper $T$-equivariant morphism. Label by $V$ a component of the fixed point locus $Y^T$ and consider the diagram
\[\begin{CD}
W @>i>> X \\
@VfVV   @VgVV \\
V @>j>> Y \\ 
\end{CD}.\]

Here $W = \amalg_k W_k$ is the (possibly disconnected) set of components of $X^T$ mapping to $V$, and $i$ and $j$ are both inclusions. As torus-fixed loci of smooth schemes (or substacks) $W$ and $V$ will also be smooth.

\begin{lem}[Correspondence of residues]\label{lem: corrres} Let $X$, $Y$ nonsingular schemes or smooth Deligne-Mumford stacks with a $T=(\C^*)^n$-action and $g:X \rightarrow Y$ a proper equivariant morphism. $W=\{W_k\}$ and $V$ are components in the torus-fixed loci of $X$ and $Y$ respectively. Then for $\F$ on $X$, \begin{displaymath} \frac{j^*g_* [\F]}{\lambda_{-1}(N^{\vee}_{V/Y})} =f_* \sum_k \frac{i^* [\F]}{\lambda_{-1}(N_{\{W_k\}/X}^{\vee})}\end{displaymath} \end{lem}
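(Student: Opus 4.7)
The plan is to derive the correspondence of residues directly from two ingredients already in play: the K-theoretic ABBV localization formula (the preceding lemma) and the self-intersection / excess-intersection formalism for the closed immersion $j : V \hookrightarrow Y$.

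First I would apply ABBV on $X$ to write $[\F] = i_{X,*} \sum_{\alpha} \frac{i_{W_\alpha}^*[\F]}{\lambda_{-1}(N^\vee_{W_\alpha/X})}$, where $\alpha$ runs over \emph{all} components of $X^T$, not just those mapping to $V$. Pushing forward by the proper equivariant map $g$ and using that $g \circ i_{W_\alpha} = j_{V_\beta} \circ f_\alpha$ whenever $W_\alpha$ maps into the fixed component $V_\beta$ (which holds because $g$ is $T$-equivariant and $W_\alpha$ is connected and $T$-fixed), one gets
\begin{equation*}
g_*[\F] \;=\; \sum_\beta j_{V_\beta,*} \sum_{W_\alpha \to V_\beta} f_{\alpha,*}\!\left(\frac{i_{W_\alpha}^*[\F]}{\lambda_{-1}(N^\vee_{W_\alpha/X})}\right).
\end{equation*}

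Next I would apply $j^* = j_V^*$. The two key facts are: (i) for $\beta \neq V$, the component $V_\beta$ is disjoint from $V$, so $j^* j_{V_\beta,*} = 0$; and (ii) by the self-intersection formula for the regular embedding $j : V \hookrightarrow Y$ of smooth (stacky) targets, $j^* j_* \gamma = \lambda_{-1}(N^\vee_{V/Y}) \cdot \gamma$ for any class $\gamma$ on $V$. Substituting these in kills every term except $\beta = V$ and produces
\begin{equation*}
j^* g_*[\F] \;=\; \lambda_{-1}(N^\vee_{V/Y}) \cdot f_* \sum_k \frac{i^*[\F]}{\lambda_{-1}(N^\vee_{W_k/X})}.
\end{equation*}
Dividing both sides by $\lambda_{-1}(N^\vee_{V/Y})$ — which is invertible after the usual localization of the $T$-equivariant K-theory at characters away from the identity, the only setting in which the ABBV statement itself has content — yields the claim.

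The main obstacle, and the point that requires the most care, is justifying the two K-theoretic ingredients in the Deligne–Mumford stack case: the self-intersection formula $j^*j_* = \lambda_{-1}(N^\vee_{V/Y})\cdot(-)$ (which rests on the Koszul resolution of $j_*\OO_V$ and hence on $j$ being a regular embedding, automatic here because both $V$ and $Y$ are smooth) and the invertibility of $\lambda_{-1}(N^\vee)$ on the fixed locus (which uses that the weights of $T$ on $N^\vee$ are all nontrivial on each component of the fixed locus). Both are exactly the points handled in \cite{ChGz, RR3}, so the argument is mostly a matter of assembling the pieces; the proper base change needed to commute $g_*$ past the sum over components is immediate because the ABBV expression is already a finite sum of proper pushforwards from the $W_\alpha$.
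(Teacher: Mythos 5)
Your proposal is correct and follows essentially the same route as the paper's proof: decompose $[\F]$ over the fixed locus via ABBV, use functoriality of pushforward along $g\circ i = j\circ f$, apply the self-intersection formula $j^*j_* = \lambda_{-1}(N^{\vee}_{V/Y})\cdot(-)$, and divide. Your handling of the components of $X^T$ not lying over $V$ (killing them via $j^*j_{V_\beta,*}=0$ for disjoint $V_\beta$) is in fact a cleaner justification than the paper's remark that ``$f$ kills'' those terms, but it is the same argument in substance.
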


\begin{proof} 
$V$ and $W$ are both nonsingular and so $i$ and $j$ are regular closed embeddings. We have a Gysin map \[j^*: K_0 (Y) \rightarrow K_0(V)\] defined by \[ j^*[\G] = \sum (-1)^i Tor_i^{\OO_Y} (\OO_V, \G),\] and likewise for $i$ (\cite{CFK}).

Equivariant localization implies that equivariant coherent sheaves on $X$ can be written as a sum of sheaves over $X^T$, the torus-fixed part of $X$. Since $f: \{ W_k\} \rightarrow V$ kills all terms in the sum not supported on $\{W_k\}$, we assume that a torus-equivariant coherent sheaf $\F$ on $X$ can be written $[\F] = i_* [\HH]$ for some $\HH \in K^T_0(\{W_k\})$. 

Given $[\F] = i_*[\HH]$, certainly \[j^* g_* [\F] = j^* g_* (i_* [\HH]).\] As $g \circ i = j \circ f$, functoriality of pushforwards implies that the right-hand side then equals $j^* (j_* f_* [\HH])$. The map $j$ is a regular embedding so $j^*j_* (f_*[\HH]) = \lambda_{-1} (N^{\vee}_{V/Y}) \otimes f_*[\HH]$. Thus we have \[j^*g_* [\F] = \lambda_{-1} (N^{\vee}_{V/Y}) f_*[\HH].\] Now apply the fact that $i^* i_*[\HH] = [\HH] \cdot \lambda_{-1} (N^{\vee}_{X^T/X}|_{\{W_k\}})$. \begin{align} j^*g_* [\F] &= \lambda_{-1} (N^{\vee}_{V/Y}) f_*[\HH] \\ &= \lambda_{-1} (N^{\vee}_{V/Y}) f_*\frac{i^* i_* [\HH]}{\lambda_{-1} (N^{\vee}_{X^T/X}|_{\{W_k\}})} \\ &= \lambda_{-1} (N^{\vee}_{V/Y}) f_* \sum_k \frac{i^* [\F]}{\lambda_{-1} (N^{\vee}_{W_k/X})} .\end{align} Thus \begin{equation} \frac{j^*g_* [\F]}{\lambda_{-1} (N^{\vee}_{V/Y})} = f_* \sum_k \frac{i^* [\F]}{\lambda_{-1} (N^{\vee}_{W_k/X})}.\end{equation}

%Theorem 5.1.5 in \cite{CFK} establishes that \begin{equation} [\F] = \sum i_* \frac{i^* [\F]}{\lambda_{-1}(N_{F/X}^{\vee})}, \end{equation} where the sum is over all torus-fixed components in $X$ and we abuse notation to let $i$ denote the inclusion here as well.
 \end{proof}

\subsection{Functoriality of the J-function for products}\label{sec: func}

In \cite{B}, Bertram proved functoriality of multiplication of J-functions; this proof is extended to K-theory below. 

Let $X_1, \; X_2$ be convex spaces, with $H_2(X_1 \times X_2,\C) = H_2(X_1,\C) \otimes H_2(X_2,\C)$. Thus classes $\beta \in H_2(X_1 \times X_2, \C)$ can be expressed as $(\beta_1, \beta_2) \in H_2(X_1,\C) \otimes H_2(X_2,\C)$. Denote projections by $\pi_1: X_1 \times X_2 \rightarrow X_1$ and $\pi_2: X_1 \times X_2 \rightarrow X_2$. We define the J-functions $J^{X_i,K}(Q_i,q_i)$ as in section \ref{sec: jfunc}.

\begin{prop} For $X_1, \; X_2$ convex spaces, \begin{equation} J^{X_1 \times X_2, K} (Q_1,Q_2,q_1,q_2) = \pi^*_1 J^{X_1,K}(Q_1,q_1) \times \pi^*_2 J^{X_2,K}(Q_2,q_2).\end{equation}\end{prop}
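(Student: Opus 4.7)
The plan is to prove the identity coefficient by coefficient in $Q_1^{d_1} Q_2^{d_2}$, reducing the statement to the identity
\[ J^{X_1 \times X_2, K}_{(d_1, d_2)} = \pi_1^* J^{X_1, K}_{d_1} \cdot \pi_2^* J^{X_2, K}_{d_2} \quad \text{in } K(X_1 \times X_2) \]
for each bidegree $(d_1, d_2)$. Writing each side as the pushforward $ev_*([\OO_{M_d}]/\lambda_{-1}(N^\vee_{M_d/G_d}))$ from the one-pointed stable-map moduli inside the graph space, my strategy is to match the two sides through a birational comparison morphism built from the two projections, and then to conclude by the projection formula.

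First, I would construct the comparison morphism. Post-composition with $\pi_1, \pi_2$ followed by contraction of newly unstable source components defines stabilization morphisms $\sigma^M_i \colon M^{X_1 \times X_2}_{(d_1, d_2)} \to M^{X_i}_{d_i}$ and $\sigma^G_i \colon G^{X_1 \times X_2}_{(d_1, d_2)} \to G^{X_i}_{d_i}$, compatible with the evaluations via $\pi_i \circ ev = ev_i \circ \sigma^M_i$ and with the inclusions $M \hookrightarrow G$. Convexity of $X_1, X_2$ guarantees that these moduli spaces are irreducible of the expected dimension and that the combined map $\rho := (\sigma^M_1, \sigma^M_2) \colon M^{X_1 \times X_2}_{(d_1, d_2)} \to M^{X_1}_{d_1} \times M^{X_2}_{d_2}$ is birational with $\rho_* \OO = \OO$. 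Second, I would analyze the normal direction to the fixed locus $M \hookrightarrow G$, which records smoothings of the node where the distinguished component $C_0 \cong \PP^1$ meets the rest of the source; equivariantly this gives a K-theory class built from $\hbar$ and the cotangent line $L$ at the marked point. Tracking how $L$ relates under $\sigma_i^M$ to the cotangent lines on $M^{X_i}_{d_i}$ should yield, via multiplicativity of $\lambda_{-1}$ on direct sums, a splitting of the denominator into $(\sigma^M_1)^*\lambda_{-1}(N^\vee_{M^{X_1}/G^{X_1}}) \cdot (\sigma^M_2)^*\lambda_{-1}(N^\vee_{M^{X_2}/G^{X_2}})$ up to a boundary correction supported where $\rho$ fails to be an isomorphism.

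Finally, I would combine these pieces. The projection formula for $\rho$ and for $ev_1 \times ev_2$, together with the K\"unneth identification on $X_1 \times X_2$, lets me rewrite
\[ ev_*\!\left(\frac{[\OO_{M^{X_1 \times X_2}_{(d_1,d_2)}}]}{\lambda_{-1}(N^\vee)}\right) = (ev_1 \times ev_2)_* \rho_* \!\left(\frac{[\OO]}{\lambda_{-1}(N^\vee)}\right) \]
and identify the right-hand side with the external product, hence with $\pi_1^* J^{X_1,K}_{d_1} \cdot \pi_2^* J^{X_2,K}_{d_2}$. The main obstacle will be the normal-bundle/cotangent-line step: on the open locus where $\rho$ is an isomorphism the splitting is transparent from the product structure of the target, but over the boundary where one projection contracts a tail of $C$ the cotangent line at the marked point need not pull back from either factor. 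Showing that this discrepancy vanishes after $\rho_*$ will most likely require an appeal to the correspondence of residues (Lemma \ref{lem: corrres}) on a common intermediate compactification, using the rational singularities of the graph and quasimap spaces to ensure the structure sheaf pushes forward to the structure sheaf.
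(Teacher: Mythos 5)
Your overall architecture --- reduce to a fixed bidegree, compare moduli via the stabilization morphisms induced by the projections, and finish with the projection formula and a correspondence of residues --- is the right family of ideas, but the load-bearing claim in your first step is false. The map $\rho = (\sigma^M_1,\sigma^M_2)\colon M^{X_1\times X_2}_{(d_1,d_2)} \to M^{X_1}_{d_1}\times M^{X_2}_{d_2}$ is \emph{not} birational: since $\dim \overline{M}_{0,1}(X,\beta) = \dim X + \langle c_1(T_X),\beta\rangle - 2$, the source exceeds the target in dimension by $2$. Concretely, a pair of one-pointed maps $(f_1,f_2)$ with independent pointed domains determines a map to the product only after choosing an identification of those domains, so the generic fibre of $\rho$ is a torsor under $\mathrm{Aut}(\PP^1,p)$. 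Hence $\rho_*\OO=\OO$ fails and the projection-formula step built on it collapses. Birationality holds one level up, for $\Phi\colon G^{X_1\times X_2}_{(d_1,d_2)} \to G^{X_1}_{d_1}\times G^{X_2}_{d_2}$, because the distinguished parameterized component rigidifies the domain: generically both sides are $\Map_{d_1}(\PP^1,X_1)\times \Map_{d_2}(\PP^1,X_2)$. Graph spaces of convex varieties are orbifolds with at worst rational singularities, so $\Phi_*[\OO_{G^{X_1\times X_2}}]=[\OO_{G^{X_1}\times G^{X_2}}]$; this is the identity your argument actually needs.

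With that repair the difficulty you flag at the end dissolves. You do not need to split $N^{\vee}_{M/G}$ into pullbacks of the factors' normal bundles, nor to control the cotangent line over the boundary where a projection contracts a tail, nor to invoke quasimap spaces. Apply the correspondence of residues (Lemma \ref{lem: corrres}) once, to the proper equivariant morphism $\Phi$ with $\F=\OO_{G^{X_1\times X_2}}$ and fixed component $V=M^{X_1}_{d_1}\times M^{X_2}_{d_2}$: the left-hand side is the residue of $\Phi_*\OO = \OO_{G^{X_1}\times G^{X_2}}$ at $V$, which factors as $\rho_1^*\bigl(1/\lambda_{-1}(N_1^{\vee})\bigr)\otimes\rho_2^*\bigl(1/\lambda_{-1}(N_2^{\vee})\bigr)$ because $N_{V/G^{X_1}\times G^{X_2}}$ is the direct sum of the pullbacks of the two normal bundles and $\lambda_{-1}$ is multiplicative; the right-hand side is $\phi_*\bigl(1/\lambda_{-1}(N^{\vee})\bigr)$. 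Each side carries only the normal bundle of its own fixed locus in its own ambient space, so no boundary correction ever appears. Composing with $(ev^1\times ev^2)_*$ and using $\pi_i\circ ev = ev^i\circ\rho_i\circ\phi$ then yields the K\"unneth factorization of $J^{X_1\times X_2,K}_{(d_1,d_2)}$.
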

%** what about the choice of a divisor or hyperplane class that Bertram makes?****
\begin{proof}

 Any map $f:X \rightarrow Y$ induces a map of moduli spaces, \begin{equation} f_{0,m} : \overline{M}_{0,m}(X, \beta) \rightarrow \overline{M}_{0,m}(Y, f_* \beta) \end{equation} as well as maps between the associated graph spaces. If $f$ is equivariant, these maps will be compatible with the induced group action. 

Write $N$ for the normal bundle $N_{M^{X_1 \times X_2}_{(\beta_1, \beta_2)}/G^{X_1 \times X_2}_{(\beta_1, \beta_2)}}$ and $N_i$ for the normal bundles $N_{M^{X_i}_{\beta_i}/G^{X_i}_{\beta_i}}$. (Recall that $M^X_{\beta} = \overline{M}_{0,1}(X, \beta)$.) We must show that \begin{equation}\label{eq: thepoint} ev_* \frac{1}{\lambda_{-1}(N^{\vee})} = \pi^*_1 ev^1_* \frac{1}{\lambda_{-1}(N_1^{\vee})} \otimes \pi^*_2 ev^2_* \frac{1}{\lambda_{-1}(N_2^{\vee})}. \end{equation}

In the following diagram note that when $X_1$, $X_2$ are convex, as assumed, $\Phi$ is a birational morphism.

\begin{equation}\label{eq: firstdiag}\begin{CD}
G^{X_1 \times X_2}_{(\beta_1, \beta_2)} @>{\Phi}>> G^{X_1}_{\beta_1} \times G^{X_2}_{\beta_2} \\
@AAA   @AAA \\
M^{X_1 \times X_2}_{(\beta_1, \beta_2)} @>{\phi}>>  M^{X_1}_{ \beta_1} \times M^{X_2}_{\beta_2} \\
@V{ev}VV  @V{ev^1 \times ev^2}VV \\
X_1 \times X_2 @>=>> X_1 \times X_2
\end{CD}.\end{equation}

 Graph spaces of convex varieties are orbifolds and thus have at worst rational singularities. In K-theory, then,  \begin{equation} \Phi_* [\OO_{G^{X_1\times X_2}_{ (\beta_1, \beta_2)}}] = [\OO_{G^{X_1}_{\beta_1} \times G^{X_2}_{ \beta_2}}]. \end{equation}

Let $\rho_i$ be the projections \[\rho_i: \overline{M}_{0,1}(X_1, \beta_1) \times \overline{M}_{0,1}( X_2, \beta_2)  \rightarrow \overline{M}_{0,1}(X_i, \beta_i),\] Correspondence of residues (Lemma \ref{lem: corrres}) implies \begin{equation}\label{eq: corres} \phi_* \left( \frac{1}{\lambda_{-1}(N^{\vee})} \right) =  \rho_1^* \left( \frac{1}{\lambda_{-1}(N_1^{\vee})} \right) \rho_2^* \left( \frac{1}{\lambda_{-1}(N_2^{\vee})} \right). \end{equation} By the equality at the bottom of the diagram (\ref{eq: firstdiag}), $ev_* = (ev^1 \times ev^2)_* \phi_*$. This combined with (\ref{eq: corres}) gives \begin{align}\label{eq: laststep} ev_*  \left( \frac{1}{\lambda_{-1}(N^{\vee})} \right) &=  (ev^1 \times ev^2)_* \phi_* \left( \frac{1}{\lambda_{-1}(N^{\vee})} \right) \nonumber \\ &= (ev^1 \times ev^2)_*   [\rho_1^* \left( \frac{1}{\lambda_{-1}(N_1^{\vee})} \right) \otimes \rho_2^* \left( \frac{1}{\lambda_{-1}(N_2^{\vee})} \right)].\end{align} Last, observe that for both $i = 1,2$ the following diagram commutes:

\begin{diagram} 
M^{X_1}_{\beta_1}\times M^{X_2}_{\beta_2} & & \\
\dTo^{ev^1 \times ev^2} &\rdTo^{\rho_i} &  \\
X_1 \times X_2 & &M^{X_i}_{\beta_i}\\ 
\dTo^{\pi_i}  &\ldTo_{ev^i} &\\ 
X_i & 
\end{diagram}

Thus  \begin{equation}\label{eq: reallast} (ev^1 \times ev^2)_*   \left[\rho_1^* \left( \frac{1}{\lambda_{-1}(N_1^{\vee})} \right) \otimes \rho_2^* \left( \frac{1}{\lambda_{-1}(N_2^{\vee})} \right) \right]  = \pi^*_1 ev^1_* \frac{1}{\lambda_{-1}(N_1^{\vee})} \otimes  \pi^*_2 ev^2_* \frac{1}{\lambda_{-1}(N_2^{\vee})} . \end{equation}
Multiplicativity of the K-theoretic J-function follows from equations (\ref{eq: laststep}) and (\ref{eq: reallast}). \end{proof}

\subsection{Pushforward}\label{sec: pushf}

A lemma on pushforwards, analogous to the many other Gysin map lemmas for cohomology (\cite{BCK1,BCK2, FulPrag, Prag, Br, AC}), is needed for computation at the end of section (\ref{sec: quot}). First recall some properties of homogeneous spaces $G/P$, for $G$ a Lie group and $P$ a parabolic subgroup in $G$. Under the action of the maximal torus $T \subset P \subset G$, $G/P$ has finitely many isolated fixed points indexed by permutations $w$ in the Weyl group $W/W_P$ of $G/P$. Label these fixed points $q_w$. Inclusion $i_w: q_w \hookrightarrow G/P$ induces pullback $i^*_w: K_0^TT(G/P) \rightarrow K_0^T(pt)$. The tangent space to $G/P$ at a torus-fixed point $q_w$ is isomorphic to $\mathfrak{g} / \mathfrak{p}$, and the torus acts with weights $w(\alpha)$, where $\alpha \in \Delta^+$ are the positive roots. Moreover, the pullback $i_w^* \F$ for a T-equivariant vector bundle $\F$ is equivalent to $w[\F]$: we write $[\F]$ as a polynomial in the $\LL_i$ (``Chern line bundles'') and let $w$ act on the indices.

We eventually wish to push forward from a flag bundle to a Grassmannian. Consider then parabolic subgroups $P$ and $P'$, $T \subset P' \subset P \subset G$. The pushforward is $\pi: G/P \rightarrow G/P'$. The Weyl group $W/W_P$ acts on $K_0^T(G/P)$ and the pullback map $\pi^*$ identifies $K_0^T(G/P')$ with $K_0^T(G/P)^{W/W_P'}$.

We have a diagram
\[\begin{CD}
\amalg_{w} \{q_{w}\} @>{i_{w}}>> G/P \\
@V{f}VV   @VV{\pi}V \\
\amalg_{w'}\{ q_{w'}\} @>{j_{w'}}>> G/P'
\end{CD}\] where $j_{w'}: \{p_{w'}\} \hookrightarrow G/P'$ and $i_{ w} : \{q_{w}\} \hookrightarrow G/P$ are the inclusions.

\begin{lem}\label{thm: pushf} Let $X = G/P$ and $Y = G/P'$, for $G$ a compact connected Lie group and $T \subset P' \subset P \subset G$, $P,P'$ parabolic subgroups. Consider $[\F] \in K^T_0(X)$. In $K_0^T(Y)$, \begin{equation} \pi_* [\F] = \sum_{w \in W/W_P} (-1)^{\epsilon (w)} \frac{w([\F])}{\lambda^{\C^*}_{-1} (T^{\vee}_{\pi})}.\end{equation} The right-hand side of the equality is invariant under $W/W_{P'}$, hence may be viewed as a class on $Y$.\end{lem}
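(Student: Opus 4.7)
The plan is to apply the K-theoretic correspondence of residues (Lemma~\ref{lem: corrres}) to the proper $T$-equivariant morphism $\pi: X = G/P \to Y = G/P'$, verify the identity fixed-point-by-fixed-point on $Y$, and then invoke injectivity of restriction $K^T_0(Y) \hookrightarrow \bigoplus_{w'} K^T_0(q_{w'})$ to globalize. The preceding discussion has already supplied the two inputs I need: the weight identification $i_w^*[\F] = w([\F])$ at every $q_w \in X^T$, and the fact that the $T$-fixed points of $X$ and $Y$ are indexed by $W/W_P$ and $W/W_{P'}$ respectively.

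Fix $w' \in W/W_{P'}$. The $T$-fixed points of $X$ mapping to $q_{w'}$ are exactly the $q_w$ for $w \in W/W_P$ projecting to $w'$, i.e.\ the $T$-fixed points of the fiber $\pi^{-1}(q_{w'})$, itself a partial flag variety for a Levi of $P'$. The relative tangent sequence
\begin{equation*} 0 \to T_\pi \to T_X \to \pi^* T_Y \to 0 \end{equation*}
restricts at any such $q_w$ to a split short exact sequence of $T$-representations, so multiplicativity of $\lambda_{-1}$ gives
\begin{equation*} \lambda_{-1}(N^\vee_{q_w/X}) = \lambda_{-1}(T^\vee_{\pi,q_w}) \cdot \lambda_{-1}(N^\vee_{q_{w'}/Y}). \end{equation*}
Plugging this together with $i_w^*[\F] = w([\F])$ into Lemma~\ref{lem: corrres} for $\pi$, and cancelling the common factor $\lambda_{-1}(N^\vee_{q_{w'}/Y})$, yields
\begin{equation*} j_{w'}^* \pi_*[\F] \;=\; \sum_{\substack{w \in W/W_P \\ w \mapsto w'}} \frac{w([\F])}{\lambda_{-1}(T^\vee_{\pi,q_w})}. \end{equation*}
Restricting the class $\lambda_{-1}^{\C^*}(T^\vee_\pi)$ on $X$ to $q_w$ produces exactly $\lambda_{-1}(T^\vee_{\pi,q_w})$, so this is the $q_{w'}$-restriction of a fiber-only version of the claimed formula.

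The remaining task, and the main obstacle, is to upgrade this fiber sum to the full sum over $w \in W/W_P$ weighted by $(-1)^{\epsilon(w)}$, and simultaneously verify $W/W_{P'}$-invariance so the expression descends to a well-defined class on $Y$. I expect this to reduce to the standard Weyl-denominator cancellations: the extra terms with $w \not\mapsto w'$ should pair up under the finite group $W_{P'}/W_P$ acting on the denominator $\lambda_{-1}(T^\vee_\pi)$ and cancel by antisymmetry of the length sign, exactly as in the Bott pushforward formula for flag bundles. The precise sign conventions and combinatorial bookkeeping can be imported from the cohomological Gysin-map identities of \cite{BCK1, BCK2, FulPrag, Prag, Br, AC}; everything else (tangent-sequence splitting, weight identification, and injectivity of restriction to $Y^T$) is either recalled above or is routine for smooth $T$-schemes with isolated fixed points.
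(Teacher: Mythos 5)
Your setup---correspondence of residues for $\pi$, the factorization $\lambda_{-1}(N^\vee_{q_w/X}) = \lambda_{-1}(T^\vee_{\pi}|_{q_w})\cdot\lambda_{-1}(N^\vee_{q_{w'}/Y})$ coming from the relative tangent sequence, and the weight identification $i_w^*[\F] = w([\F])$---matches the paper's, which likewise starts from Lemma~\ref{lem: corrres} and reassembles over the fixed points of $Y$ (the paper does this by applying $j_* = \sum_{w'} j_{w'*}$ and invoking the ABBV lemma rather than by restricting to each $q_{w'}$ and citing injectivity of restriction to $Y^T$, but these are equivalent moves). Up to your displayed identity $j_{w'}^*\pi_*[\F] = \sum_{w\mapsto w'} w([\F])/\lambda_{-1}(T^\vee_{\pi,q_w})$ the argument is sound.

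The gap is everything after that, and it is the actual content of the lemma. You stop at a fiberwise localization sum whose denominators $\lambda_{-1}(T^\vee_{\pi,q_w})$ still depend on $w$, and you assert that the passage to the stated formula---a single sum over all of $W/W_P$ with signs $(-1)^{\epsilon(w)}$ and the uniform denominator $\lambda_{-1}^{\C^*}(T^\vee_\pi)$---``should'' follow from Weyl-denominator cancellations to be imported from the cited references. That is a statement of the remaining problem, not a proof; in particular the sign $(-1)^{\epsilon(w)}$ is never derived. Moreover the mechanism you propose is not the right one: there are no ``extra terms with $w \not\mapsto w'$'' to pair up and cancel, since such terms simply do not occur in $j_{w'}^*\pi_*[\F]$---they account for the restrictions to the other fixed points of $Y$, and the full sum over $W/W_P$ is just the union of the fiber sums. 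What is actually needed, and what the paper supplies, is the weight bookkeeping: the torus acts on $T_{q_w}X$ with weights $w(\alpha)$ for $\alpha\in\Delta^+$, so that $\lambda_{-1}(N^\vee_{q_w/X})$ equals $(-1)^{\epsilon(w)}\lambda_{-1}(N^\vee_{id/X})$ after rewriting, combined with $T_\pi = T_X - \pi^*T_Y$ and $j_{w'*}j_{w'}^*[\G] = [\G]\otimes\lambda_{-1}(N^\vee_{q_{w'}/Y})$ to absorb the $Y$-normal factor into the relative tangent bundle. Without carrying out this step your proposal establishes only the standard localization expression for $\pi_*$, not the Bott-type closed formula claimed in the lemma.
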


\begin{proof} Start with the correspondence of residues. From lemma (\ref{lem: corrres}), we know that  \begin{displaymath}  \frac{j_{w'}^*\pi_* [\F]}{\lambda_{-1}(N^{\vee}_{q_{w'}/Y})} =f_* \sum_w \frac{i_w^* [\F]}{\lambda_{-1}(N_{\{q_w\}/X}^{\vee})}.\end{displaymath} Apply $j_* = \sum_{w'} j_{w'*}$ to both sides: then the left hand side reduces to $\pi_* [\F]$ by the ABBV theorem. On the right-hand side, $i^*_w [\F] = w[\F]$ as noted above, and $\lambda_{-1}(N_{q_w/X}^{\vee})$ evaluated at a given $w$ is $(-1)^{\epsilon (w)} \lambda_{-1} (N^{\vee}_{id/X})$, where $N^{\vee}_{id/X}$ is the normal/tangent bundle with torus action given by positive weights.

Thus\begin{align}j_*f_* \sum_{w \in W/W_P} \frac{i_w^* [\F]}{\lambda_{-1}(N_{\{q_w\}/X}^{\vee})} &= j_*f_* \sum_{w \in W/W_P} \frac{w [\F]}{(-1)^{\epsilon(w)} \lambda_{-1}(N_{id/X}^{\vee})} \\ &= j_* \sum_{w \in W/W_P} \frac{w [\F]}{(-1)^{\epsilon(w)} \lambda_{-1}(N_{id/X}^{\vee})} \\&=   \sum_{w \in W/W_P} \frac{w [\F]}{(-1)^{\epsilon(w)} \lambda_{-1}(N_{id/X}^{\vee})}\cdot \lambda_{-1}(N^{\vee}_{q_{w'}/Y})\\ &= \sum_{w \in W/W_P} \frac{w [\F]}{(-1)^{\epsilon(w)} \lambda_{-1}(T_{\pi}^{\vee})}.\end{align} Exactness of pushforward to a point justifies the second equality, and the third equality comes from $j_*j^* [\G] = [\G] \otimes \lambda_{-1}(N^{\vee}_{q_{w'}/Y})$ for $[\G] \in K^T_0(Y)$. We rewrite to acknowledge that $T_{\pi}= T_X - \pi^*T_Y$. Summing over $w \in W/W_P$ will automatically sum over all $w' \in W/W_{P'}$, and the sum will be $W/W_{P'}$-invariant.

\end{proof}

For later use, we compute this explicitly in terms of line bundles for the pushforward from a type A flag manifold $\Fl(m_1, \ldots, m_{\ell}, m_{\ell+1}=r)$ to a type A Grassmannian $Gr(r,n)$ with tautological bundle $S$. Compute $[T_{\pi}] = [T\Fl]-\pi^*[TGr]$. The exact sequences for these tangent bundles are, again, \begin{align*} 0 \rightarrow S^{\vee} \otimes S \rightarrow S^{\vee} &\otimes \C^n \rightarrow TGr \rightarrow 0 \\ 0 \rightarrow K \rightarrow \pi^* S^{\vee} &\otimes \C^n \rightarrow TFl \rightarrow 0. \end{align*} $K$ has a filtration, \begin{equation*} 0 = K_0 \subset K_1 \subset \cdots \subset K_{\ell-1} \subset K_{\ell}=K ,\end{equation*} and each quotient can be written in terms of the tautological bundles $S_{m_i}$ of the flag manifold:  $K_i/K_{i-1} = (S_{m_i}/S_{m_{i-1}})^{\vee} \otimes S_{m_i}$. Each of these filters further as \[  (S_{m_i}/S_{m_{i-1}})^{\vee} \otimes S_{m_1} \subset  (S_{m_i}/S_{m_{i-1}})^{\vee} \otimes S_{m_2 } \subset \cdots \subset (S_{m_i}/S_{m_{i-1}})^{\vee} \otimes S_{m_i}.\]  Thus \begin{align*} [T_{\pi}] &= [T\Fl] -\pi^* [TGr] \\ &= \pi^*[S^{\vee} \otimes \C^n] - [\pi^* S^{\vee} \otimes \C^n] +\pi^*[ S^{\vee} \otimes S] - \sum_{i \leq j} [(S_{m_j}/S_{m_{j-1}})^{\vee} \otimes (S_{m_i}/S_{m_{i-1}})] .\end{align*} Using the splitting principle to write the tautological bundles of the flag in terms of $\sum_{s=1}^{m_j-m_{j-1}} \LL_{m_{j-1}+s} = S^{\vee}_{m_j}$, \begin{align}\label{eq:splitex} \lambda_{-1}^{\C^*} (T^{\vee}_{\pi}) &= \frac{\lambda_{-1}^{\C^*}( \pi^*[S \otimes S^{\vee}]}{\lambda_{-1}^{\C^*} ((\sum_{i \leq j} [(S_{m_j}/S_{m_{j-1}})^{\vee} \otimes (S_{m_i}/S_{m_{i-1}})])^{\vee})} \\ &= \frac{\prod_{i,j} (1- \LL_i^{\vee} \otimes \LL_j)}{\prod_{i \leq j} \prod_{s,t} (1- \LL_{m_{j-1}+s}^{\vee} \otimes \LL_{m_{i-1}+t}) } \\ &= \prod_{i > j} \prod_{s,t} (1- \LL^{\vee}_{m_{j-1}+s} \otimes \LL_{m_{i-1}+t}).\end{align} This is a class in $K_0^T(Gr(r,n))$ because it is invariant under the action of $S_r \times S_{n-r}$.

\section{The J-function of the Grassmannian}\label{sec: quot}

The K-theoretic J-function of the Grassmannian is interesting not only for its own sake, but because it will be used in every other J-function computation hereafter.

\begin{thm} The K-theoretic J-function for the Grassmannian $Gr(r,n)$ is 
\begin{displaymath} J^{Gr(r,n),K}(Q,q) = \sum_d Q^d J^{Gr(r,n),K}_d(q) \end{displaymath} where
\begin{displaymath} J^{Gr(r,n),K}_d(q) =  \sum_{d_1+\cdots + d_r = d} (-1)^{(r-1)d} \left( \frac{\prod_{1 \leq j <i \leq r}  (1-\LL^{\vee}_i \otimes \LL_j q^{d_i-d_j})}{\prod_{1 \leq j < i \leq r} (1- \LL^{\vee}_i \otimes \LL_j) \prod_{i=1}^{r} \prod_{\ell = 1}^{d_{i}} (1 - \LL^{\vee}_i q^{\ell})^n} \right).\end{displaymath}\end{thm}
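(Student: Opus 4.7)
The plan is to use localization on the Quot scheme $\Quot := \Quot_{\PP^1, d}(\C^n, n-r)$, whose $\C^*$-fixed loci are much simpler than those of the graph space $G^{Gr(r,n)}_d$, and transfer the answer back via the quasimap space $Gr(r,n)_d$, to which both $G^{Gr(r,n)}_d$ and $\Quot$ admit natural morphisms. Correspondence of residues (Lemma \ref{lem: corrres}) provides the transfer mechanism; this is the approach announced in section \ref{sec: simlem}.

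Concretely, I would first rewrite the coefficient $J_d^{Gr(r,n), K}(q) = ev_*\bigl([\OO_{M^{Gr(r,n)}_d}]/\lambda_{-1}(N^{\vee}_{M^{Gr(r,n)}_d/G^{Gr(r,n)}_d})\bigr)$ as a sum over localized contributions from $\C^*$-fixed components of $\Quot$ by applying correspondence of residues to each map in the diagram $G^{Gr(r,n)}_d \to Gr(r,n)_d \leftarrow \Quot$. The end result is that $J_d^{Gr(r,n),K}(q)$ equals the pushforward to $Gr(r,n)$ of the Atiyah--Bott contributions from those components of $\Quot^{\C^*}$ which map to the distinguished $\C^*$-fixed copy of $Gr(r,n)$ inside $Gr(r,n)_d$.

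Those fixed components are indexed by ordered tuples $(d_1, \ldots, d_r)$ with $\sum d_i = d$ and $d_i \geq 0$: a fixed quotient $0 \to S \to \OO_{\PP^1}^{\oplus n} \to Q \to 0$ has $S \cong \bigoplus_{i=1}^r \OO_{\PP^1}(-d_i)$, with each line summand attached to a line of $\C^n$ at a fixed point of $\PP^1$. The component $F_{(d_1,\ldots,d_r)}$ is a complete flag bundle over $Gr(r,n)$ carrying tautological line bundles $\LL_1, \ldots, \LL_r$. I would then compute the normal bundle $N^{\vee}_{F_{(d_1,\ldots,d_r)}/\Quot}$ from the tangent sequence $0 \to \Hom(S,S) \to \Hom(S, \OO_{\PP^1}^{\oplus n}) \to T\Quot|_{F} \to 0$ by reading off $\C^*$-characters on $\PP^1$: the nonzero-weight part of $\Hom(S, \OO_{\PP^1}^{\oplus n})$ contributes $\prod_{i=1}^r \prod_{\ell=1}^{d_i}(1 - \LL_i^{\vee} q^{\ell})^n$ in $\lambda_{-1}(N^{\vee})$, and the nonzero-weight part of $\Hom(S, S)$ contributes the numerator factor $\prod_{j<i}(1-\LL_i^{\vee} \otimes \LL_j q^{d_i - d_j})$ together with the overall sign $(-1)^{(r-1)d}$.

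The final step is to push forward from $F_{(d_1,\ldots,d_r)}$ to $Gr(r,n)$, producing the remaining denominator factor $\prod_{j<i}(1 - \LL_i^{\vee} \otimes \LL_j)$: this is exactly the pushforward computed in Lemma \ref{thm: pushf} and equation (\ref{eq:splitex}). Summing over the ordered splittings of $d$ then gives the claimed formula. I expect the main technical obstacle to be the residue-transfer step: matching fixed loci across the graph space and $\Quot$ through the quasimap space (which, while having rational singularities, is not smooth) requires care, and is where the orbifold pushforward machinery of section \ref{sec: simlem} does the real work. A secondary challenge is the bookkeeping of $\C^*$-weights in the normal bundle computation — separating the weight-zero directions (tangent to $F_{(d_1,\ldots,d_r)}$) from the genuine normal directions, and pinning down the overall sign.
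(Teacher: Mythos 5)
Your overall strategy is the one the paper follows: localize on $\Quot_{\PP^1,d}(\C^n,n-r)$, transfer between the graph space and the Quot scheme through the quasimap space via correspondence of residues (using that $u$ and $v$ are both rational resolutions of $Gr(r,n)_d$, so $u_*[\OO_{G^{Gr(r,n)}_d}]=v_*[\OO_{\Quot}]$), and push the fixed-locus contributions down to $Gr(r,n)$ with Lemma \ref{thm: pushf}. The genuine gap is in your identification of the $\C^*$-fixed loci. They are not indexed by ordered tuples $(d_1,\dots,d_r)$ and they are not complete flag bundles: by Lemmas \ref{thm: 11inBCK} and \ref{thm: 12inBCK}, the components with torsion supported at $0$ are indexed by \emph{unordered} splitting types $0\le d_1\le\cdots\le d_r$ of the kernel, and the component for a given splitting type is the \emph{partial} flag bundle $\Fl(m_1,\dots,m_k;S)$ determined by the jumping indices --- a complete flag bundle only when the $d_i$ are pairwise distinct, and just $Gr(r,n)$ itself when they all coincide (e.g.\ the splitting $(1,1)$ for $r=2$, $d=2$). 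Taken literally, your bookkeeping double-counts the degenerate splittings and assigns them the wrong normal bundle. The sum over ordered tuples in the theorem is not a sum over components: it appears only at the last step, when the sum over splitting types is combined with the Weyl-group sum over $w\in S_r/(S_{r_1}\times\cdots\times S_{r_{k+1}})$ produced by Lemma \ref{thm: pushf}, each coset representative carrying a nondecreasing tuple to a distinct ordered one. That reindexing is the step your outline skips, and it is exactly where the denominator $\prod_{j<i}(1-\LL_i^{\vee}\otimes\LL_j)$ and the sign $(-1)^{(r-1)d}$ get distributed correctly among the terms.

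A second, smaller inaccuracy: the tangent sheaf of the Quot scheme along a fixed component is governed not by the three-term sequence $0\to\Hom(S,S)\to\Hom(S,\OO^{\oplus n})\to T\Quot\to 0$ you wrote, but by the four-term sequence $0\to\pi_*(\K^{\vee}\otimes\K)\to\pi_*\K^{\vee}\otimes\C^n\to T\Quot\to R^1\pi_*(\K^{\vee}\otimes\K)\to 0$. The single surviving numerator factor $(1-\LL_i^{\vee}\otimes\LL_j q^{d_i-d_j})$ for each pair $j<i$, together with the sign $(-1)^{r_ir_j(d_{ij}-1)}$, arises from the cancellation between $R^0\pi_*(\K^{\vee}\otimes\K)$ and the Serre dual of $R^1\pi_*(\K^{\vee}\otimes\K)$; the naive weight decomposition of $\pi_*(\K^{\vee}\otimes\K)$ alone would retain all the intermediate factors $\prod_{m=1}^{d_{ij}-1}(1-\LL_i^{\vee}\otimes\LL_j q^m)$ and produce no sign. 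With the fixed loci and the tangent--obstruction sequence corrected, the rest of your outline matches the paper's proof.
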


\begin{proof} The proof that follows is quite analogous to the proof by localization for the cohomological version in \cite{BCK1}. The reader is referred to \cite{BCK1} for the proofs of the statements of the lemmas (\ref{thm: 11inBCK}, \ref{thm: 12inBCK}) below on the torus-fixed loci of Quot schemes.

To understand torus-fixed points of $\Quot_{\PP^1,d}(\C^n, n-r)$, $\Quot_{\PP^1,d} (\C^r,0)$ must be understood. The points of $\Quot_{\PP^1,d} (\C^r,0)$ are degree $d$ torsion quotients of $\C^r \otimes \OO_{\PP^1}$. The top exterior power of the kernel vector bundle gives a map to $\PP^d$: \begin{align}  \wedge^r: \Quot_{\PP^1,d}(\C^r,0) &\rightarrow \Quot_{\PP^1,d}(\C,0) =\PP^d \\ \wedge^r(K \subset \C^r \otimes \OO_{\PP^1}) &\rightarrow ( \wedge^r K) \subset \C \otimes \OO_{\PP^1}. \end{align} This kernel $K$ is a locally free sheaf on $\PP^1$ and splits as $K \cong \OO (d_1) \oplus \OO (d_2) \oplus \ldots \oplus \OO (d_r)$, $\sum d_i = d$. Require that $0 \leq d_1 \leq d_2 \leq \cdots \leq d_r$ to make this splitting unique. Then:

\begin{lem}[1.1 in \cite{BCK1}]\label{thm: 11inBCK} For each splitting type $\{d_i\}$ as above, let $m_1 < m_2 < \cdots m_k$ denote the jumping indices (i.e., $0 \leq d_1 = \cdots d_{m_1} < d_{m_1+1} = \cdots =d_{m_2} < \cdots)$. Then there is an embedding of the flag manifold: \begin{displaymath} i_{\{d_i\}} :\Fl (m_1, m_2, \ldots, m_k; r) \hookrightarrow \Quot_{\PP^1, d} (\C^r,0)\end{displaymath} with the property that each fixed point of the $\C^*$-action with $\supp (\mathcal{Q}) = \{ 0 \}$ and with the kernel splitting type $\{ d_i \}$ corresponds to a point of the image of $i_{\{d_i \}}$. \end{lem}

Recall from our pushforward example the universal flag on $\Fl (m_1, m_2, \ldots, m_k; r)$: \begin{displaymath} 0 \subseteq S_{m_1} \subset S_{m_2} \subset \cdots S_{m_k} \subset S_{m_{k+1}} = \C^r \otimes \OO_{\Fl}. \end{displaymath} Denote the projection from $\PP^1 \times \Fl$ to $\Fl$ by $\pi$. Construct a sheaf $\K$ over $\PP^1 \times \Fl$ so that $\K:= S^{(k)}_{m_{k+1}}$ where \begin{displaymath} 0 \subset \pi^* S_{m_1} \subset S^{(1)}_{m_2} \subset \cdots \subset S^{i-1}_{m_i} \subset \cdots \subset S^{(k)}_{m_{k+1}} \subset \C^r \otimes \OO_{\PP^1 \times \Fl}\end{displaymath} and \begin{displaymath}S^{(i-1)}_{m_i}/S^{(i-2)}_{m_{i-1}} \cong \pi^* (S_{m_i}/S_{m_{i-1}})(-d_{m_i}z). \end{displaymath}

\begin{lem}[1.2 in \cite{BCK1}]\label{thm: 12inBCK} There is a natural $\C^*$-equivariant embedding \begin{displaymath} j: \Quot_{\PP^1,d}(S,0) \hookrightarrow \Quot_{\PP^1,d} (\C^n, n-r)\end{displaymath} such that all the fixed points of the $\C^*$-action on $\Quot_{\PP^1,d} (\C^n, n-r)$ are contained in the image. The fixed points of $\Quot_{\PP^1,d}(\C^n, n-r)$ that also satisfy $\supp( \mathrm{tor} (\mathcal{Q})) = \{ 0 \}$ (the support of the torsion part of $\mathcal{Q}$) are precisely the images of flag manifolds \begin{displaymath} i_{\{d_i \}} : \Fl (m_1, m_2, \ldots, m_k, r;n) = \Fl (m_1, m_2, \ldots, m_k; S) \hookrightarrow \Quot_{\PP^1, d}(S,0)\end{displaymath} embedded by the relative version of Lemma 1.1.\end{lem}

Use the following diagram to justify a K-theoretic version of lemma 1.3 from \cite{BCK1}, which shifts the calculation of the residue to the Quot scheme and allows us to exploit the flag variety structure of the Quot scheme's torus-fixed loci.
\begin{diagram} 
\Quot_{\PP^1,d}(\C^n,n-r) &\rTo_v &\PP_d^{\binom{n}{r}-1} &\lTo_u &G^{Gr(r,n)}_d) \\
\uInto & &\uInto & &\uInto \\
\amalg_{\{d_i\}} i_{\{d_i\}} (\Fl) &\rInto_{p} &\PP^{\binom{n}{r}-1} &\lInto_{q} &M^{Gr(r,n)}_d\\ 
&\rdTo_{\rho} &\uInto &\ldTo_{ev}\\ 
& &Gr(r,n) 
\end{diagram}

\begin{lem} \begin{equation} J_d^{Gr,K} (q) = \sum_{\{d_i\}} \rho_* \left( \frac{[\OO_{i_{\{d_i\}}\Fl}]}{\lambda_{-1}^{\C^*} (T^{\vee}_{i_{\{d_i\}} \Fl})} \right), \end{equation} where $\rho: \Fl (m_1, \ldots, m_k,r,n) \rightarrow Gr(r,n)$. \end{lem} \begin{proof}  The second row of the diagram consists of $\C^*$-fixed loci of the first row. Use correspondence of residues twice and the fact that $u_*[\OO_{G^{Gr(r,n)}_d}] = v_*[\OO_{\Quot_{\PP^1,d}(\C^n, n-r)}]$ to see that \[q_* \left( \frac{[\OO_{M^{Gr(r,n)}_d}]}{\lambda_{-1}^{\C^*} (N^{\vee}_{M^{Gr(r,n)}_d / G^{Gr(r,n)}_d})} \right) = \sum_{\{d_i\}} p_* \left( \frac{[\OO_{i_{\{d_i \}}\Fl_{\{d_i\}}}]}{\lambda_{-1}^{\C^*} (T^{\vee}_{i_{\{d_i\}} \Fl})} \right).\] (See \cite{SS} for a proof that the image of $u$ and $v$, $Gr(r,n)_d$, has rational singularities.) However, this equality is on $\PP^{\binom{n}{r}-1}$ rather than $Gr(r,n)$. Add an additional action by $T = (\C^*)^n$ with weights $\lambda_1, \ldots, \lambda_n$ to obtain an injective map induced by the Pl\"{u}cker embedding \[ K_T^* (Gr(r,n)) \rightarrow K_T^*(\PP^{\binom{n}{r}-1}). \]

This allows us to view the following classes in $K_T(\PP^{\binom{n}{r}-1}$ as classes in $K_T(Gr(r,n))$: \[q_{T*} \left( \frac{[\OO_{M^{Gr(r,n)}_d}]}{\lambda_{-1}^{\C^* \times T} (N^{\vee}_{M^{Gr(r,n)}_d / G^{Gr(r,n)}_d})} \right) = \sum_{\{d_i\}} p_{T*} \left( \frac{[\OO_{\Fl_{\{d_i\}}}]}{\lambda_{-1}^{\C^* \times T} (T^{\vee}_{i_{\{d_i\}} \Fl})} \right) \in K_T^* (Gr(r,n)).\] 
In particular, their restrictions are also in $K_T(Gr(r,n))$:  \[ev_{T*} \left( \frac{[\OO_{M^{Gr(r,n)}_d}]}{\lambda_{-1}^{\C^* \times T} (N^{\vee}_{M^{Gr(r,n)}_d / G^{Gr(r,n)}_d})} \right) = \sum_{\{d_i\}} \rho_{T*} \left( \frac{[\OO_{i_{\{d_i \}}\Fl_{\{d_i\}}}]}{\lambda_{-1}^{\C^* \times T} (T^{\vee}_{i_{\{d_i\}} \Fl})} \right) \in K_T^* (Gr(r,n)).\] As $\lambda_i \rightarrow 0$ the $\C^* \times T$-equivariant terms go to the $\C^*$-equivariant terms:   \[ev_{*} \left( \frac{[\OO_{M^{Gr(r,n)}_d}]}{\lambda_{-1}^{\C^*} (N^{\vee}_{M^{Gr(r,n)}_d / G^{Gr(r,n)}_d})} \right) = \sum_{\{d_i\}} \rho_{*} \left( \frac{[\OO_{i_{\{d_i \}}\Fl_{\{d_i\}}}]}{\lambda_{-1}^{\C^*} (T^{\vee}_{i_{\{d_i\}} \Fl})} \right) \in K^* (Gr(r,n)).\]   \end{proof}

To find the J-function for the Grassmannian, then, we need to find $\lambda_{-1}^{\C^*}(T^{\vee}_{i_{\{d_i\}} \Fl})$ and push it forward using localization. We know that \begin{align} \lambda_{-1}^{\C^*}(T_{i_{\{d_i\}} \Fl}^{\vee}) &= \lambda_{-1}^{\C^*}(T\Quot|_{i_{\{d_i\}} Fl}^{\vee} / TFl^{\vee}) \\ &= \frac{\lambda_{-1}^{\C^*}((\pi_* \K^{\vee} / \pi^* S^{\vee})^{\vee})^n \lambda_{-1}^{\C^*}((R^1\pi_* \K^{\vee} \otimes \K)^{\vee})}{\lambda_{-1}^{\C^*}((\pi_*(\K^{\vee} \otimes \K)/K)^{\vee})}. \end{align} The second equality comes from the following exact sequences: \begin{equation} 0 \rightarrow \pi_* (\K^{\vee} \otimes \K) \rightarrow \pi_* \K^{\vee} \otimes \C^n \rightarrow T\Quot \rightarrow R^1 \pi_* \K^{\vee} \otimes \K \rightarrow 0\end{equation} \begin{equation} 0 \rightarrow K \rightarrow \rho^* S^{\vee} \otimes \C^n \rightarrow TFl(m_1, \ldots, m_k, S) \rightarrow 0. \end{equation}

The filtration of $K$ worked out in the pushforward example (see section (\ref{sec: pushf})) gives \begin{equation} [TFl] = n [\rho^* S^{\vee}] - \sum_{i \leq j} [(S_{m_j}/S_{m_{j-1}})^{\vee} \otimes (S_{m_i}/S_{m_{i-1}})]. \end{equation} Once again, the splitting principle gives this in terms of line bundles: \[\sum_{s=1}^{m_i-m_{i-1}} \LL_{m_{i-1}+s} = (S_{m_i}/S_{m_{i-1}})^{\vee}\] in the Grothendieck group of $Fl$. Thus \begin{equation} [TFl] = n \sum_i \sum_s \LL_{m_{i-1}+s} - \sum_{i \leq j} \sum_{s,t} (\LL_{m_{j-1}+s} \otimes \LL_{m_{i-1}+t}^{\vee}) .\end{equation} Taking the equivariant Euler class of the dual of the previous, \begin{equation} \lambda_{-1}^{\C^*} (TFl^{\vee}) = \frac{\prod_{i=1}^{\ell} \prod_{s=1}^{m_i-m_{i-1}} (1 - \LL^{\vee}_{m_{i-1}+s})^n}{\prod_{i \leq j} \prod_{s,t} (1 - \LL^{\vee}_{m_{j-1}+s} \otimes \LL_{m_{i-1}+t})}. \end{equation}

Moving to $T\Quot$, note that we restrict to $\PP^1 \times Fl$ from $\PP^1 \times \Quot$, and then project by $\pi: \PP^1 \times Fl \rightarrow Fl$. We will use the fact that \begin{equation} [\K^{\vee} \otimes \K ] = \sum_{i,j =1}^{k+1} [\pi^* ((S_{m_i}/S_{m_{i-1}})^{\vee} \otimes (S_{m_j}/S_{m_{j-1}})) (d_{m_i}-d_{m_j})]\end{equation} It is easiest to consider the complex $R^{\bullet} \pi_* (\K^{\vee} \otimes \K)$, because many cancellations will occur between the two terms.

Compute by looking at the complex $R^{\bullet} \pi_* (\K^{\vee} \otimes \K)$ fiberwise. \begin{multline}[R^{\bullet} \pi_* (\K^{\vee} \otimes \K)] \cong [H^0(\PP^1, \oplus_{i,j = 1}^{r+1} \pi^* ((S_{m_i}/S_{m_{i-1}})^{\vee}) \otimes (S_{m_j}/S_{m_{j-1}})(d_{m_i} - d_{m_j}))] \\ -[H^1(\PP^1, \oplus_{i,j = 1}^{r+1} \pi^* ((S_{m_i}/S_{m_{i-1}})^{\vee}) \otimes (S_{m_j}/S_{m_{j-1}})(d_{m_i} - d_{m_j}))] \\ = [H^0(\PP^1, \oplus_{1 \leq j \leq i \leq r+1} \pi^* ((S_{m_i}/S_{m_{i-1}})^{\vee}) \otimes (S_{m_j}/S_{m_{j-1}})(d_{m_i} - d_{m_j}))] \\ -[H^1(\PP^1, \oplus_{1 \leq i < j \leq r+1} \pi^* ((S_{m_i}/S_{m_{i-1}})^{\vee}) \otimes (S_{m_j}/S_{m_{j-1}})(d_{m_i} - d_{m_j}))] \\  \end{multline} In the last line we drop the portions of the fiber bundle that don't contribute because of the twist by $\OO(d_{m_i} - d_{m_j})$.

Use Serre duality on the $H^1$ term: \begin{multline}  [H^1(\PP^1, \oplus_{1 \leq i < j \leq r+1} \pi^* ((S_{m_i}/S_{m_{i-1}})^{\vee}) \otimes (S_{m_j}/S_{m_{j-1}})(d_{m_i} - d_{m_j}))] \\  \cong [H^0(\PP^1, \oplus_{1 \leq i < j \leq r+1} \pi^* ((S_{m_i}/S_{m_{i-1}})) \otimes (S_{m_j}/S_{m_{j-1}})^{\vee}(-d_{m_i} + d_{m_j}-2))]^{\vee}.\end{multline} 

In order to get equivariant Euler characteristic, rewrite using splitting principle as sum of line bundles: \begin{multline}[R^{\bullet} \pi_* (\K^{\vee} \otimes \K)] =[ \oplus_{1 \leq j \leq i \leq r+1} (\oplus_s \LL_{m_{i-1}+s}) \otimes (\oplus_t \LL^{\vee}_{m_{j-1}+t})(d_{m_i}-d_{m_j})] \\ - [ \oplus_{1 \leq i <j \leq r+1} (\oplus_s \LL^{\vee}_{m_{i-1}+s}) \otimes (\oplus_t \LL_{m_{j-1}+t})(-d_{m_i}+d_{m_j}-2)].  \end{multline} Since the dual appears on the line bundle with smaller index on both sides, reindex so that $\LL^{\vee}$ has index $j$, $j <i$.\begin{multline}[R^{\bullet} \pi_* (\K^{\vee} \otimes \K)] =[ \oplus_{1 \leq j \leq i \leq r+1} (\oplus_s \LL_{m_{i-1}+s}) \otimes (\oplus_t \LL^{\vee}_{m_{j-1}+t})(d_{m_i}-d_{m_j})] \\ - [ \oplus_{1 \leq j <i \leq r+1} (\oplus_s \LL_{m_{i-1}+s}) \otimes (\oplus_t \LL^{\vee}_{m_{j-1}+t})(-d_{m_j}+d_{m_i}-2)].  \end{multline}
When we take equivariant Euler classes, we decompose according to the character $q^k$ of the $S^1$-action on $\PP^1$: $\lambda_{-1}^{\C^*} (\oplus_i \LL_i \otimes \OO(d)) = \prod_i \prod_{k = 0}^d (1- \LL_i q^k)$. Looking at the $S^1$-action on $H^0(\PP^1, F (d))$ and $H^1(\PP^1, F(d))^{\vee}$ tells us that all sections ``in the middle'' cancel, and we are left with \begin{multline}\lambda_{-1}^{\C^*} ((R^0 \pi_* (\K^{\vee} \otimes \K) \ominus R^1 \pi_* (\K^{\vee} \otimes \K))^{\vee}) \\ = \prod_{i >j} \prod_{s,t}(-1)^{r_i r_j (d_{ij}-1)} (1- \LL^{\vee}_{m_{i-1}+s} \otimes \LL_{m_{j-1}+t} q^{d_{ij}})(1- \LL^{\vee}_{m_{i-1}+s} \otimes \LL_{m_{j-1}+t}),\end{multline} where $r_i = m_i -m_{i-1}$ and $d_{ij} =d_i -d_j$. In addition, there is a term with $i=j$ that cancels with an equal contribution from $K$ below.

The rest of the calculation proceeds more easily:

\begin{align} \lambda_{-1}^{\C^*} (K^{\vee}) &= \lambda_{-1}^{\C^*} ([\oplus_{1 \leq i \leq j \leq r+1} (S_{m_j}/S_{m_{j-1}})^{\vee} \otimes (S_{m_i}/S_{m_{i-1}})]^{\vee}) \\ &= \prod_{1 \leq i \leq j \leq r+1} \prod_{s,t} (1- \LL^{\vee}_{m_{j-1}+s} \otimes \LL_{m_{i-1}+t}) \end{align} Notice that within is the piece with $i = j$ that cancels exactly the same contribution from $[R^{\bullet} \pi_* (\K^{\vee} \otimes \K)]$.

Finally, \begin{align} \lambda_{-1}^{\C^*} ((\pi_* \K )^n) &= \lambda_{-1}^{\C^*}( ( \pi_* (\oplus_i \pi^*(S_{m_i}/S_{m_{i-1}})^{\vee}(d_{m_i})))^n)^{\vee} \\ &= \prod_{i=1}^{r+1} \prod_{s=1}^{r_i} \prod_{k=0}^{d_{m_i}} (1- \LL^{\vee}_{m_{i-1}+s} q^k)^n  \end{align} while \begin{equation} \lambda_{-1}^{\C^*} ((\pi_* S)^n) = \prod_{i=1}^{r+1} \prod_{s=1}^{r_i} (1- \LL^{\vee}_{m_{i-1}+s})^n . \end{equation} Thus \begin{equation} \lambda_{-1}^{\C^*} (\pi_* \K/\pi^{\vee} S )^n  = \prod_{i=1}^{r+1} \prod_{s=1}^{r_i} \prod_{k=1}^{d_{m_i}} (1- \LL^{\vee}_{m_{i-1}+s} q^k)^n .\end{equation}

Combine all of the above, with a little bit of re-indexing here and there: 

\begin{equation}\label{eq: FinQYd} \lambda_{-1}^{\C^*} (T^{\vee}_{i_{\{d_i\}}\Fl}) = \frac{\prod_{i=1}^{r+1} \prod_s \prod_{\ell = 1}^{d_{m_i}} (1 - \LL^{\vee}_{m_{i-1}+s} q^{\ell})^n}{\prod_{i >j} (-1)^{r_ir_j(d_{ij}-1)} \prod_{s,t} (1-\LL^{\vee}_{m_{i-1}+s} \otimes \LL_{m_{j-1}+t} q^{d_{ij}})}.\end{equation}

This is what we now need to push forward, as \begin{equation} J^{Gr(r,n),K}_d( \hbar) = \sum_{\{d_i\}} \rho_* \left( \frac{\prod_{i >j} (-1)^{r_ir_j(d_{ij}-1)} \prod_{s,t} (1-\LL^{\vee}_{m_{i-1}+s} \otimes \LL_{m_{j-1}+t} q^{d_{ij}})}{\prod_{i=1}^{r+1} \prod_s \prod_{\ell = 1}^{d_{m_i}} (1 - \LL^{\vee}_{m_{i-1}+s} q^{\ell})^n} \right).\end{equation} Using the pushforward lemma (\ref{thm: pushf}), we know that \begin{equation}\sum_{\{d_i\}} \rho_* P = \sum_{\{d_i\}} \sum_w w \left[ \frac{P}{\lambda_{-1}^{\C^*} (T^{\vee}_{\pi})} \right] \end{equation} for $w \in S_r/(S_{r_1} \times \cdots \times S_{r_{k+1}})$.

Finally, use the reasoning from \cite{BCK1}: given any partition $d_i$ of $d$ with $k+1$ distinct $d_i$s and $r_i$ denoting multiplicity, there is a unique $w \in S_r / (S_{r_1} \times \cdots \times S_{r_{k+1}})$ such that $w^{-1}$ arranges $(d_1, \ldots, d_r)$ in nondecreasing order $d_1 \leq \cdots \leq d_r$. Use this to replace the double sum:
 \begin{align} \begin{split}J^{Gr(r,n),K}_d( q) &= \sum_{\{d_i\}} \sum_w\left( \frac{\prod_{j <i}\prod_{s,t} (1-\LL^{\vee}_{m_{i-1}+s} \otimes \LL_{m_{j-1}+t} q^{d_{ij}})}{\prod_{i=1}^{r+1} \prod_s \prod_{\ell = 1}^{d_{m_i}} (1 - \LL^{\vee}_{m_{i-1}+s} q^{\ell})^n} \right. \\ &\quad\cdot \left. \frac{1}{\prod_{j <i} (-1)^{r_ir_j(d_{ij}-1)} \prod_{s,t} (1-\LL^{\vee}_{m_{j-1}+s} \otimes \LL_{m_{i-1}+t})} \right) \end{split} \\ &=  \sum_{d} (-1)^{(r-1)d} \left( \frac{\prod_{1 \leq j <i \leq r}  (1-\LL^{\vee}_i \otimes \LL_j q^{d_{ij}})}{\prod_{1 \leq j < i \leq r} (1- \LL^{\vee}_i \otimes \LL_j) \prod_{i=1}^{r} \prod_{\ell = 1}^{d_{i}} (1 - \LL^{\vee}_i q^{\ell})^n} \right).\end{align} This process will be used in all further computations of K-theoretic J-functions.
\end{proof}

\subsection{Application: the abelian-nonabelian correspondence}
One corollary of this result is the analogue of a result in \cite{BCK1}, a proof of one case of the abelian-nonabelian correspondence. The abelian-nonabelian correspondence is a broad conjectural relationship between the Gromov-Witten theories and Frobenius structures of GIT quotients of a space $V$ by a group $G$ and its maximal torus $T \subset G$. In this case, we consider $V = \Hom (\C^r, \C^n)$ and $G = GL_k$. Then $V//G = Gr(r, n)$ and $V//T = (\PP^{n-1})^r$. In \cite{BCK1} it is shown that \begin{equation} J^{Gr(r,n)} = e^{-\sigma_1(r-1)\pi \sqrt{-1}/\hbar} \frac{\mathcal{D}_{\Delta} J^{(\PP^{n-1})^r}}{\Delta}|_{t_i = t+(r-1)\pi \sqrt{-1}}, \end{equation} where \begin{equation*} \Delta = \prod_{i < j} (x_i - x_j) \quad \mathrm{and} \quad \mathcal{D}_{\Delta} = \prod_{i<j} \left( \hbar \frac{\partial}{\partial t_i } - \hbar \frac{\partial}{\partial t_j} \right),\end{equation*} and $t_i$ a basis for $H_2(Gr(r,n), \C)$, $x_i$ Chern roots of the dual of the tautological bundle $S^{\vee}$, and $\sigma_1$ the basis for $H^2(Gr(r,n),\C)$.

We must make modifications for the K-theoretic version, and take inspiration from \cite{GL}. Use the operator \begin{equation} q^{\partial /\partial t_i}: t_j \mapsto t_j + \delta_{ij} \ln q \end{equation} to define \begin{equation}\mathcal{D}_{\Delta} := \prod_{i>j} (q^{\partial /\partial t_i} -q^{\partial /\partial t_j}).\end{equation} In addition, we define \begin{equation} \Delta := \prod_{i>j} (\LL^{\vee}_i - \LL^{\vee}_j) = \prod_{i>j} (1- \LL_i^{\vee} \otimes \LL_j).\end{equation} Notice the switch from $i<j$ to $i>j$: this is because in K-theory our notion of ``positivity'' is different. (See work of \cite{GK, GR, AGM} for more on positivity in K-theory.) Last, but not least, we must use a modified version of the J-function: $(\LL^{\vee})^{\ln Q/ \ln q} J^{K}$, where $\LL^z = \LL_1^{z_1} \cdots \LL_r^{z_r}$.

\begin{cor}  \begin{equation} \frac{\mathcal{D}_{\Delta}(J^{(\PP^{n-1})^r,K})}{\Delta}|_{t_i =t+(r-1) \pi \sqrt{-1}} = (\LL^{\vee})^{\ln Q / \ln q} J^{Gr(r,n),K} \end{equation}\end{cor}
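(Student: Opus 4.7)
The plan is to compute both sides of the equation explicitly, using multiplicativity of the K-theoretic J-function together with the Grassmannian formula just proved, and then match term-by-term in the $Q$-series after accounting for the prefactor $(\LL^{\vee})^{\ln Q/\ln q}$ on the right.

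First, by the functoriality proposition of Section \ref{sec: func},
\[ J^{(\PP^{n-1})^r,K}(Q_1,\ldots,Q_r,q) = \prod_{i=1}^{r} J^{\PP^{n-1},K}(Q_i,q), \]
and each factor is given by the $r=1$ case of the theorem as $\sum_{d_i\geq 0}Q_i^{d_i}\big/\prod_{\ell=1}^{d_i}(1-\LL_i^{\vee}q^{\ell})^n$. Since $q^{\partial/\partial t_i}$ acts on $Q_i^{d_i}=e^{d_i t_i}$ by multiplication by $q^{d_i}$, the operator $\mathcal{D}_{\Delta}=\prod_{i>j}(q^{\partial/\partial t_i}-q^{\partial/\partial t_j})$ introduces the Vandermonde-type factor $\prod_{i>j}(q^{d_i}-q^{d_j})$ on each multi-degree summand. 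Dividing by $\Delta$ and specializing $t_i = t+(r-1)\pi\sqrt{-1}$ (so that $Q_i=(-1)^{r-1}Q$ and the monomial $Q_1^{d_1}\cdots Q_r^{d_r}$ collapses to $(-1)^{(r-1)d}Q^d$ with $d=d_1+\cdots+d_r$) produces a series of the form $\sum_d Q^d A_d$ in which $A_d$ is a sum over partitions of $d$ structurally parallel to $J^{Gr(r,n),K}_d$.

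The crux is then comparing $A_d$ with the coefficient of $Q^d$ in $(\LL^{\vee})^{\ln Q/\ln q}J^{Gr(r,n),K}$. The critical identity I would use is
\[ \frac{\LL_i^{\vee}q^{d_i}-\LL_j^{\vee}q^{d_j}}{\LL_i^{\vee}-\LL_j^{\vee}} \;=\; q^{d_j}\cdot\frac{1-\LL_i^{\vee}\otimes\LL_j\,q^{d_i-d_j}}{1-\LL_i^{\vee}\otimes\LL_j}, \]
which converts the Vandermonde-over-discriminant factor on the LHS into the ``geometric'' factor appearing in the Grassmannian formula, up to an accumulated character $\prod_j(\LL_j^{\vee})^{r-j}q^{\sum_j(r-j)d_j}$. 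The prefactor $(\LL^{\vee})^{\ln Q/\ln q}$ on the right, together with the phase $\prod_i(\LL_i^{\vee})^{(r-1)\pi\sqrt{-1}/\ln q}$ implicit in the substitution $t_i = t+(r-1)\pi\sqrt{-1}$, is exactly what absorbs this character, provided we interpret $(\LL^{\vee})^{\ln Q/\ln q}$ as a formal element satisfying $q^{\partial/\partial t}(\LL^{\vee})^{\ln Q/\ln q}=\LL^{\vee}\cdot(\LL^{\vee})^{\ln Q/\ln q}$ --- this is the K-theoretic analog of the cohomological phase $e^{-\sigma_1(r-1)\pi\sqrt{-1}/\hbar}$ of \cite{BCK1}.

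The main obstacle is the careful interpretation and bookkeeping around $(\LL^{\vee})^{\ln Q/\ln q}$: it must live in an extended ring where it genuinely acts as a shift of $\ln Q$ by $\ln q$ (multiplying by $\LL^{\vee}$), so that the accumulated $\LL^{\vee}$- and $q$-characters from the Vandermonde-to-geometric rearrangement are absorbed consistently with the phase produced by the specialization $t_i = t+(r-1)\pi\sqrt{-1}$. Once this formal framework is fixed, the remaining verification reduces to a coefficient-by-coefficient comparison in $Q^d$, amounting to a direct --- if somewhat intricate --- manipulation of rational expressions in the $\LL_i^{\vee}$, $q$, and $q^{d_i-d_j}$, patterned on the cohomological argument of \cite{BCK1}.
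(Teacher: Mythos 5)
Your proposal follows essentially the same route as the paper's proof: multiplicativity of the K-theoretic J-function reduces everything to the explicit $\PP^{n-1}$ formula, the operator $\mathcal{D}_{\Delta}/\Delta$ produces the factor $\prod_{i>j}(\LL_i^{\vee}q^{d_i}-\LL_j^{\vee}q^{d_j})/(\LL_i^{\vee}-\LL_j^{\vee})$ on each multidegree term, your ``critical identity'' converts this to the Grassmannian numerator and denominator, and the specialization $t_i=t+(r-1)\pi\sqrt{-1}$ supplies the sign $(-1)^{(r-1)d}$ and collapses $Q_1^{d_1}\cdots Q_r^{d_r}$ to $Q^d$. If anything, your explicit tracking of the residual character $q^{\sum_j(r-j)d_j}$ arising from that identity (and of exactly where the prefactor $(\LL^{\vee})^{\ln Q/\ln q}$ must be inserted for the operator to yield $\LL_i^{\vee}q^{d_i}$ rather than $q^{d_i}$) is more careful than the paper's own proof, which passes directly from $\prod_{i>j}(\LL_i^{\vee}q^{d_i}-\LL_j^{\vee}q^{d_j})/(\LL_i^{\vee}-\LL_j^{\vee})$ to $\prod_{i>j}(1-\LL_i^{\vee}\otimes\LL_j q^{d_i-d_j})/(1-\LL_i^{\vee}\otimes\LL_j)$ without comment on that factor.
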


\begin{proof} Recall that the K-theoretic J-function for the Grassmannian $Gr(r,n)$ is 
\begin{displaymath} J^{Gr(r,n),K}(Q,q) = \sum_d J^{Gr(r,n),K}_d(q) Q^d\end{displaymath} where
\begin{displaymath} J^{Gr(r,n),K}_d(q) =  \sum_{d_1+\cdots + d_r = d} (-1)^{(r-1)d} \left( \frac{\prod_{1 \leq j <i \leq r}  (1-\LL^{\vee}_i \otimes \LL_j q^{d_i-d_j})}{\prod_{1 \leq j < i \leq r} (1- \LL^{\vee}_i \otimes \LL_j) \prod_{i=1}^{r} \prod_{\ell = 1}^{d_{i}} (1 - \LL^{\vee}_i q^{\ell})^n} \right).\end{displaymath}

Applying $q^{\partial / \partial t_i}$ to $(\LL^{\vee})^{\ln Q/ \ln q} J^{(\PP^{n-1})^r,K}$, we get \begin{equation} q^{\partial / \partial t_i} ((\LL^{\vee})^{\ln Q/ \ln q} J^{(\PP^{n-1})^r,K}) = \sum_{d_1 + \cdots +d_k =d} \LL_i^{\vee} q^{d_i} ((\LL^{\vee})^{\ln Q/ \ln q} J_d^{(\PP^{n-1})^r,K}). \end{equation} Thus the application of $\mathcal{D}_{\Delta}/ \Delta$ to the same results in \begin{align} \frac{\mathcal{D}_{\Delta}}{\Delta} ((\LL^{\vee})^{\ln Q/ \ln q} J^{(\PP^{n-1})^r,K}) &= \sum_d \prod_{i>j} \frac{(\LL_i^{\vee}q^{d_i} - \LL_j^{\vee}q^{d_j})}{(\LL^{\vee}_i - \LL^{\vee}_j)}((\LL^{\vee})^{\ln Q/ \ln q} J_d^{(\PP^{n-1})^r,K}) \\ &=  \prod_{i>j} \frac{(1-\LL_i^{\vee}\otimes \LL_j q^{d_i-d_j})}{(1-\LL_i^{\vee} \otimes \LL_j)} ((\LL^{\vee})^{\ln Q/ \ln q} J_d^{(\PP^{n-1})^r,K}).\end{align} When $t_i$ is specialized to $t+(r-1) \pi \sqrt{-1}$, $Q_i = e^{t_i}$ goes to $e^{t+ (r-1) \pi \sqrt{-1}} = (-1)^{(r-1)}Q$, and so  

\begin{multline} \frac{\mathcal{D}_{\Delta}(J^{(\PP^{n-1})^r,K})}{\Delta}|_{t_i =t+(r-1) \pi \sqrt{-1}} = \\ (\LL^{\vee})^{\ln Q/ \ln q} \sum_d (-1)^{(r-1)d} Q^d \prod_{i>j} \frac{(1-\LL_i^{\vee}\otimes \LL_j q^{d_i-d_j})}{(1-\LL_i^{\vee} \otimes \LL_j) \prod_{i=1}^r \prod^d_{\ell=1} (1-q^{\ell} \LL_i^{\vee})^{n} },\end{multline} which is $(\LL^{\vee})^{\ln Q / \ln q} J^{Gr(r,n),K}$.

\end{proof}

\section{The J-function of flag varieties of type A}
\subsection{A simple lemma}\label{sec: simlem}

To calculate the K-theoretic J-function of the flag manifold, more machinery is needed. We will look at the flag manifold as a subvariety of a product of Grassmannians, all embedded in projective space: $Fl(m_1, \ldots, m_{\ell}=k,n) \subset \prod_{i =1}^{\ell} Gr(m_i,n) \subset \PP^N$. A more general result can be proved for homogeneous varieties $X \subset Y \subset \PP^N$. In all the following $Y$ will be a Grassmannian or a product of Grassmannians. 

Just as in our calculation of the K-theoretic J-function of projective space itself, we will want to compare $M^X_d \subset G^X_d$ to some $X_d \subset \PP^N_d$. The equations that cut $X$ and $Y$ out of $\PP^N$ naturally extend to cut out $X_d \subset Y_d \subset \PP^N_d$ because $X$ and $Y$ are homogeneous. We will also need the Quot schemes $Q^X_d \subset Q^Y_d$, where $Q^Y_d$ is another smooth compactification of $\Map_d (\PP^1,Y)$. There is an equivariant morphism $v: Q^Y_d \rightarrow \PP^N_d$. A sheaf in the Grothendieck group of $Q^Y_d$ that will enable us to calculate the K-theoretic J-function of $X$ from the K-theoretic J-function of $Y$ is desired; we show that $\OO_{Q^X_d}$ suffices if $Q^X_d$ has rational singularities.

%\begin{comment} The set-up in K-theory is similar to that in cohomology. We can define the degree $d$ component of the ($T$-equivariant) $J$-function of a space $Y$ by \begin{equation}J_d^{Y,K}(Q,q) = e_* \left( \frac{[\OO_{MY_d}]}{\lambda_{-1}^{\C^*}(N^{\vee}_{MY_d/GY_d})} \right) = e_* \left( \frac{[\OO_{MY_d}]}{(1-q \LL)}\end{equation}. \end{comment}

For $Y$ a Grassmannian or product of Grassmannians, $Y_d$ is the closure of $\Map_d (\PP^1,Y) \subset \Map_d(\PP^1, \PP^N) \subset \PP^N_d$. $Y_d$ may be singular, but has rational singularities \cite{SS}. There is a birational map $u:G^Y_d \rightarrow Y_d$ so that in cohomology $u_*[G^Y_d] = [Y_d]$. Since $G^Y_d$ is a rational desingularization we also have $u_*[\OO_{G^Y_d}] = [\OO_{Y_d}]$ in K-theory. To prove the lemma below, assume that there exists  $\OO_{Q^X_d} \in K(Q^Y_d)$ satisfying \begin{equation} (\dagger)\; v_*(\OO_{Q^X_d}) = \OO_d := u_* (\OO_{G^X_d}). \end{equation} This is equivalent to knowing that $Q^X_d$ has rational singularities.

%For the following K-theoretic modification of Lemma 6.1 from \cite{BCK2}, we will need the following:

Consider the diagram 

\begin{diagram} 
G^X_d &\rTo_{u} &\PP_d^{N} &\lTo_{v} &Q^Y_d\\
\uInto_{\alpha^X_d}(0,5) & &\uInto_{\alpha_d} & &\uInto_{\alpha_F}(0,5) \\
& &\PP^N && \\
& &\uInto_{j} &\luTo_f & \\ 
M^X_d &\rTo &Y &\lTo_g &F \\ 
&\rdTo_{ev} &\uInto_{k} &\\ 
& &X 
\end{diagram}

\begin{lem}\label{thm: simplelemma} Let $Y \subset \PP^n$ be a homogeneous variety, and $X$ the zero scheme of a regular section of a $T$-equivariant vector bundle $E \in K^T_0(X)$. (Here $T = (\C^*)^n$.) Then \begin{equation}J^{X,K}_d ( q)= k^*g_* \left( \frac{\alpha_F^* [\OO_{Q^X_d}] /g^*\lambda_{-1}^{\C^*} (E^{\vee})}{\lambda_{-1}^{\C^*} (N^{\vee}_{F/Q^Y_d})} \right) \end{equation}\end{lem}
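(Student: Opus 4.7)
My plan mirrors the Grassmannian proof of Section \ref{sec: quot}: apply correspondence of residues (Lemma \ref{lem: corrres}) along both $u : G^X_d \to \PP^N_d$ and $v : Q^Y_d \to \PP^N_d$ at the $\C^*$-fixed copy $\PP^N \subset \PP^N_d$, use hypothesis $(\dagger)$ to identify the two resulting localized pushforwards, descend from $\PP^N$ to $Y$ by a Pl\"ucker/auxiliary-torus trick, and then pull the resulting equality back along $k$ using the Koszul description of $X$.

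For the first step, applying Lemma \ref{lem: corrres} to $u$ with $\F = [\OO_{G^X_d}]$ picks out the $\C^*$-fixed locus $M^X_d \subset G^X_d$ (curves concentrated over $0 \in \PP^1$) as the only component whose image lies in $\PP^N$. Since $(\alpha^X_d)^{*}[\OO_{G^X_d}] = [\OO_{M^X_d}]$ and the restriction of $u$ to $M^X_d$ factors as $j \circ k \circ ev$, the correspondence gives
\[
\frac{\alpha_d^{*}\,u_{*}[\OO_{G^X_d}]}{\lambda^{\C^*}_{-1}(N^{\vee}_{\PP^N/\PP^N_d})} \;=\; j_{*}\,k_{*}\,J^{X,K}_d(q).
\]
Applying the lemma to $v$ with $\F = [\OO_{Q^X_d}]$, using $F$ as the fixed locus and $f = j \circ g$, yields
\[
\frac{\alpha_d^{*}\,v_{*}[\OO_{Q^X_d}]}{\lambda^{\C^*}_{-1}(N^{\vee}_{\PP^N/\PP^N_d})} \;=\; j_{*}\,g_{*}\!\left(\frac{\alpha_F^{*}[\OO_{Q^X_d}]}{\lambda^{\C^*}_{-1}(N^{\vee}_{F/Q^Y_d})}\right).
\]
Hypothesis $(\dagger)$ says $u_{*}[\OO_{G^X_d}] = \OO_d = v_{*}[\OO_{Q^X_d}]$, so the two left-hand sides are the same class in $K_{\C^*}(\PP^N)$, hence the two right-hand sides agree as classes in $K_{\C^*}(\PP^N)$ pushed forward from $Y$.

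To descend from $\PP^N$ to $Y$, I repeat the trick employed in Section \ref{sec: quot}: refine the equivariance by $T = (\C^*)^n$, so that the Pl\"ucker-style embedding makes the pushforward $j_{*}$ behave injectively on the classes in question, then take the non-equivariant limit $\lambda_i \to 0$. This produces the intermediate identity
\[
k_{*}\,J^{X,K}_d(q) \;=\; g_{*}\!\left(\frac{\alpha_F^{*}[\OO_{Q^X_d}]}{\lambda^{\C^*}_{-1}(N^{\vee}_{F/Q^Y_d})}\right) \quad \text{in } K(Y).
\]
Because $X$ is the zero scheme of a regular section of $E$, the conormal bundle of $k : X \hookrightarrow Y$ is $E^{\vee}|_X$, so $k^{*}k_{*}\alpha = \lambda^{\C^*}_{-1}(E^{\vee}|_X) \cdot \alpha$ and $\lambda^{\C^*}_{-1}(E^{\vee}|_X) = k^{*}\lambda^{\C^*}_{-1}(E^{\vee})$. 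Pulling the intermediate identity back by $k^{*}$ and dividing by $\lambda^{\C^*}_{-1}(E^{\vee}|_X)$ isolates $J^{X,K}_d(q)$ on the left; on the right, the projection formula $g_{*}(\beta \cdot g^{*}\gamma) = g_{*}(\beta)\cdot \gamma$ (applied to $\gamma = \lambda^{\C^*}_{-1}(E^{\vee})$) lets me absorb the denominator as $g^{*}\lambda^{\C^*}_{-1}(E^{\vee})$ inside the pushforward, reproducing the stated formula.

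The main obstacle is the descent step: one must verify that $M^X_d$ (respectively $F$) is the unique $\C^*$-fixed component that actually contributes above the chosen copy $\PP^N \subset \PP^N_d$ and that the auxiliary $T$-equivariant Pl\"ucker argument remains valid for an arbitrary homogeneous $Y$ rather than just a Grassmannian. As in the Grassmannian proof, other fixed loci (trees concentrated at $\infty$, or split between the two poles) either map to a different fixed component of $\PP^N_d$ or produce contributions that cancel symmetrically between the two applications of $(\dagger)$.
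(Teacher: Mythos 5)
Your proposal follows the paper's proof essentially step for step: correspondence of residues applied along both $u$ and $v$ at the fixed copy $\PP^N \subset \PP^N_d$, identification of the two localized pushforwards via $(\dagger)$, and the self-intersection/projection-formula argument for the regular embedding $k$. The only (cosmetic) difference is the descent from $\PP^N$ to $Y$: rather than invoking injectivity of $j_*$ via an auxiliary-torus Pl\"ucker argument, the paper keeps the class pushed into $\PP^N$, applies $i^*i_* = \lambda^{\C^*}_{-1}(N^{\vee}_{X/\PP^N})\otimes(-)$ together with $i^* j_* = k^*\bigl(\lambda^{\C^*}_{-1}(N^{\vee}_{Y/\PP^N})\otimes(-)\bigr)$, and cancels the common factor to land directly on $\lambda^{\C^*}_{-1}(N^{\vee}_{X/Y}) = k^*\lambda^{\C^*}_{-1}(E^{\vee})$ --- which sidesteps the injectivity question you flag as your main obstacle.
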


\begin{proof}We use $\C^*$-equivariant K-theory to compute the degree $d$ component of the $J$-function of $X$. By definition, \begin{equation}J_d^{X,K}(Q,q) = ev_* \left( \frac{[\OO_{M^X_d}]}{\lambda_{-1}^{\C^*}(N^{\vee}_{M^X_d/G^X_d})} \right) = ev_* \left( \frac{[\OO_{M^X_d}]}{(1-q \LL)} \right)\end{equation} Recall that $\C^*$ acts on the graph space $G^X_d$ and has $M^X_d$ as a component of the fixed locus. Let $i$ be the composition of inclusions $k: X \hookrightarrow Y$ and $j: Y \hookrightarrow \PP^N$. Apply $i_*$ to both sides:  \begin{equation} i_*J^X_d(q) = i_*ev_* \left( \frac{[\OO_{M^X_d}]}{\lambda_{-1}^{\C^*}(N^{\vee}_{M^X_d/G^X_d})} \right) \end{equation}

From the diagram preceding the lemma, notice that we can use some version of the correspondence of residues, noticing that $ \frac{[\OO_{M^X_d}]}{\lambda_{-1}^{\C^*}(N^{\vee}_{M^X_d/G^X_d})}$ itself is the residue on $M_d^X$ of the class $[\OO_{G^X_d}]$. Thus  \begin{align}  i_*ev_* \left( \frac{[\OO_{M^X_d}]}{\lambda_{-1}^{\C^*}(N^{\vee}_{M^X_d/G^X_d})} \right)&=   \frac{\alpha_d^*u_*[\OO_{G^X_d}]}{\lambda_{-1}^{\C^*}(N^{\vee}_{\PP^N/\PP^N_d})}  \\ &=  \frac{\alpha_d^*v_*[\OO_{Q^X_d}]}{\lambda_{-1}^{\C^*}(N^{\vee}_{\PP^N/\PP^N_d})} . \end{align} The second equality follows from the $(\dagger)$ assumption.

We can use correspondence of residues again with another part of the diagram:
\[\begin{CD}
  Q^Y_d @>v>> \PP^N_d\\
   @A{\alpha_F}AA @A{\alpha_d}AA\\ 
  F @>f>> \PP^N
\end{CD}.\]

Here $F$ is the fixed component of $Q^Y_d$; for ease of notation we pretend there is only one component although there may be several. This gives us  \begin{align}\label{eq: aboveeq}  i_* J^{X,K}_d (q) &= \frac{\alpha_d^*v_*[\OO_{Q^X_d}]}{\lambda_{-1}^{\C^*}(N^{\vee}_{\PP^N/\PP^N_d})} \\ &= f_* \frac{\alpha_F^*[\OO_{Q^X_d}]}{\lambda_{-1}^{\C^*}(N^{\vee}_{F/Q^Y_d})} \\ &= j_*g_*\frac{\alpha_F^*[\OO_{Q^X_d}]}{\lambda_{-1}^{\C^*}(N^{\vee}_{F/Q^Y_d})}. \end{align} Notice that $f$ is a composition of $g$ and $j$.

Since $X$ is cut out of $Y$ as the zero-section of a $T$-equivariant bundle, $\C^* \times T$-equivariant K-theory is used to obtain the equality \begin{equation} J^{X,K}_d (q) = \frac{i^*i_*J^{X,K}_d(q)}{\lambda_{-1}^{\C^*}(N^{\vee}_{X/\PP^N})}.\end{equation} Use (\ref{eq: aboveeq}) to rewrite: \begin{equation} J^{X,K}_d(q) = \frac{1}{\lambda_{-1}^{\C^*}(N^{\vee}_{X/\PP^N})} i^*\left(j_*g_*\frac{\alpha_F^* [\OO_{Q^X_d}]}{\lambda_{-1}^{\C^*}(N^{\vee}_{F/Q^Y_d})}\right). \end{equation} For the last time, we use correspondence of residues noticing that $i$ is the composition of $j$ and $k$.

\begin{equation} J^{X,K}_d (q)=\frac{1}{\lambda_{-1}^{\C^*}(N^{\vee}_{X/\PP^N})} i^*\left(j_*g_*\frac{\alpha_F^* [\OO_{Q^X_d}]}{\lambda_{-1}^{\C^*}(N^{\vee}_{F/Q^Y_d})}\right) =  \frac{1}{\lambda_{-1}^{\C^*}(N^{\vee}_{X/Y})} k^*g_*\frac{\alpha_F^* [\OO_{Q^X_d}]}{\lambda_{-1}^{\C^*}(N^{\vee}_{F/Q^Y_d})}. \end{equation}

Rewrite $\lambda_{-1}^{\C^*}(N^{\vee}_{X/Y}) = k^* \lambda_{-1}^{\C^*} (E^{\vee})$. This is a sum of locally free sheaves, so the projection formula implies \begin{equation} J^X_d (e^{-\hbar}) = k^*g_* \left( \frac{\alpha^*_F [\OO_{Q^X_d}] /g^* \lambda_{-1}^{\C^*} (E^{\vee}) }{\lambda_{-1}^{\C^*} (N^{\vee}_{F/Q^Y_d})} \right) .\end{equation} This will be used to calculate the K-theoretic J-functions of the flag varieties of type A. \end{proof}

\subsection{Flag Varieties of Type A}
 Consider, as in \cite{BCK2}, the embedding of the flag into the product of Grassmannians \begin{equation} Fl(m_1, \ldots, m_{\ell},n) \subset \prod_{i=1}^{\ell} Gr(m_i, n) \subset \PP^N. \end{equation} The flag variety is cut out as the zero scheme of a section of the vector bundle \begin{equation}E = \oplus_{i=1}^{\ell} Hom(S_i, Q_{i+1}) \cong  \oplus_{i=1}^{\ell} S_i^{\vee} \otimes Q_{i+1}, \end{equation} where $S_i$ is the tautological bundle and $Q_i$ the quotient bundle for each $Gr(m_i,n)$. Look at the corresponding product of Quot schemes of vector bundle subsheaves $K \subset \C^n \otimes \OO_{\PP^1}$ of degree $-d_i$ and rank $m_i$. We denote by $HQ_d$ the zero scheme of the section of $\pi_* \left(  \oplus_{i=1}^{\ell} \K^{\vee}_i \otimes V/\K_{i+1} \right)$ on this product. This is the fundamental class of the ``hyperquot'' scheme, and it is smooth and irreducible. Its structure sheaf $\OO_{HQ_d}$ satisfies the $(\dagger)$ condition \cite{GL}.

\begin{thm} The K-theoretic J-function for flag varieties $\Fl(m_1, m_2, \ldots, m_{\ell},S)$ of type A is \begin{displaymath}\label{eq: Aflag} J^{\Fl,K}(Q,q) = \sum_d J^{\Fl,K}_d(q) Q^d\end{displaymath} where \begin{equation}\begin{split} J^{\Fl,K}_d(q)= \sum_{\sum d_{i,j} = d_i}\prod_{i=1}^{\ell} (-1)^{(m_i-1) d_i} \prod_{1 \leq k \neq j \leq m_i}\frac{ \prod^{d_{i,k}-d_{i,j}}_{m= - \infty} (1-\LL^{\vee}_{i,j} \otimes \LL_{i,k} q^m)}{\prod^0_{m=-\infty} (1- \LL^{\vee}_{i,j} \otimes \LL_{i,k}q^m)}  \\ \cdot \prod_{1 \leq j \leq m_i, 1 \leq k \leq m_{i+1}} \frac{\prod_{m= - \infty}^0 (1- \LL_{i,j}^{\vee} \otimes \LL_{i+1,k} q^m)}{\prod_{m= - \infty}^{d_{i,j}-d_{i+1,k}} (1- \LL_{i,j}^{\vee} \otimes \LL_{i+1,k} q^m)} .\end{split}\end{equation} Here $\LL_{i,j}$ are the $j$ Chern line bundles coming from splitting of $S_i^{\vee}$ andthe sum is over $d=(d_1, \ldots, d_{\ell})$, where $d_i$ comes from pairing the curve class with $c_1(S^{\vee}_i)$.\end{thm}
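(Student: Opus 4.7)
The plan is to apply Lemma \ref{thm: simplelemma} with $X = \Fl(m_1, \ldots, m_\ell, n)$ embedded in $Y = \prod_{i=1}^\ell Gr(m_i, n) \subset \PP^N$, taking $Q^Y_d = \prod_i \Quot_{\PP^1,d_i}(\C^n, n-m_i)$ and $Q^X_d = HQ_d$, the hyperquot scheme, which is stated in the paragraph preceding the theorem to satisfy the $(\dagger)$ condition. The bundle $E = \oplus_i S_i^{\vee} \otimes Q_{i+1}$ cuts $\Fl$ out of $Y$, and lifts naturally to the section of $\pi_*(\oplus_i \K_i^{\vee} \otimes V/\K_{i+1})$ whose zero locus is $HQ_d$. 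The lemma then reduces $J^{\Fl,K}_d(q)$ to $k^* g_*$ of a class on the $\C^*$-fixed locus $F \subset Q^Y_d$.

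First I would identify $F$. By the product version of Lemmas \ref{thm: 11inBCK} and \ref{thm: 12inBCK}, the $\C^*$-fixed loci supported over $0 \in \PP^1$ are indexed by $\ell$-tuples of splitting types $\{d_{i,j}\}$ with $\sum_j d_{i,j} = d_i$, and each component is (relatively) a product of partial flag bundles over $Y$. Then I would compute the ingredients of the formula in Lemma \ref{thm: simplelemma} one at a time:
\begin{itemize}
\item For $\lambda_{-1}^{\C^*}(N^{\vee}_{F/Q^Y_d})$, the normal bundle decomposes as a direct sum over the index $i$, and each summand matches the Grassmannian computation that led to equation (\ref{eq: FinQYd}). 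This produces the first line of the theorem's formula, namely the factor $\prod_{i} (-1)^{(m_i-1)d_i} \prod_{k \neq j} \prod_{m=-\infty}^{d_{i,k}-d_{i,j}} (1-\LL_{i,j}^{\vee}\otimes \LL_{i,k} q^m)/\prod_{m=-\infty}^{0}(1-\LL_{i,j}^{\vee}\otimes \LL_{i,k}q^m)$, after converting the finite products in (\ref{eq: FinQYd}) to the bi-infinite shorthand used in the statement.
\item For $\alpha_F^*[\OO_{HQ_d}]/g^*\lambda_{-1}^{\C^*}(E^{\vee})$, I would evaluate $R^\bullet \pi_*$ of $\oplus_i (\K_i^{\vee} \otimes V/\K_{i+1})$ fiberwise on each fixed component. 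Using the splitting principle to write $\K_i$ and $\K_{i+1}$ as sums of line bundles twisted by $\OO(d_{i,j}z)$, and applying Serre duality to the $H^1$ contribution exactly as in the Grassmannian proof, the sections in the middle of each degree range cancel and one is left with products of factors $(1 - \LL_{i,j}^{\vee} \otimes \LL_{i+1,k} q^m)$ whose range of $m$ depends on the sign of $d_{i,j}-d_{i+1,k}$. This yields the second line of the formula.
\end{itemize}

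Next I would push forward from $F$ to $Y$ via Lemma \ref{thm: pushf}, using the explicit expression (\ref{eq:splitex}) for $\lambda^{\C^*}_{-1}(T^{\vee}_\pi)$ on each flag-bundle factor. As in the Grassmannian case this produces a second sum over Weyl cosets $w \in S_{m_i}/\prod_k S_{r_{i,k}}$. Following the consolidation step from \cite{BCK1} that was used at the end of the Grassmannian proof, every tuple $(d_{i,1}, \ldots, d_{i,m_i})$ with $\sum_j d_{i,j}=d_i$ arises from a unique pair (splitting type, $w$) with $w^{-1}$ sorting it into non-decreasing order, so the double sum collapses to a single sum over all such tuples. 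Finally, $k^*$ restricts the tautological bundles $S_i$ on $Y$ to the universal flag tautological bundles on $\Fl$, identifying $\LL_{i,j}$ with the Chern line bundles of $S_i^{\vee}$, and gives the stated formula for $J_d^{\Fl,K}(q)$.

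The main obstacle I expect is bookkeeping rather than any new idea: keeping track of the cross-terms $S_i^\vee \otimes Q_{i+1}$ in the section cutting out $HQ_d$, matching the finite products of Euler classes to the bi-infinite $\prod_{m=-\infty}^{N}$ notation in the statement (so that the $(Q/S)$ part upstairs really cancels into the second line's numerator and denominator as written), and verifying that the signs $(-1)^{(m_i-1)d_i}$ add up correctly from the combined contributions of $\lambda^{\C^*}_{-1}$, Serre duality, and the Weyl-group signs in Lemma \ref{thm: pushf}. Once that accounting is done, the structure of the proof is entirely parallel to the Grassmannian theorem and the abelian–nonabelian corollary above.
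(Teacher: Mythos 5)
Your proposal follows essentially the same route as the paper's proof: invoking Lemma \ref{thm: simplelemma} with $Y=\prod_i Gr(m_i,n)$ and $Q^X_d = HQ_d$, computing $\alpha_F^*[\OO_{HQ_d}]$ via the Koszul resolution and $R^\bullet\pi_*$ with Serre duality, dividing by $\lambda_{-1}^{\C^*}(E^{\vee})$, reusing the Grassmannian computation (\ref{eq: FinQYd}) for $\lambda_{-1}^{\C^*}(N^{\vee}_{F/Q^Y_d})$, and pushing forward with Lemma \ref{thm: pushf} before collapsing the double sum over splitting types and Weyl cosets. The steps and the bookkeeping concerns you flag match the paper's argument, so the proposal is correct.
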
 

\begin{proof} Finding $\lambda_{-1}^{\C^*}(E^{\vee}) $ is a fairly straightforward computation using the exact sequence \begin{equation} 0 \rightarrow \oplus_{i=1}^{\ell} S_i^{\vee} \otimes S_{i+1} \rightarrow \oplus_{i=1}^{\ell -1} S_i^{\vee} \otimes \C^n \rightarrow E \rightarrow 0. \end{equation} To compute $\alpha^*_F[\OO_{HQ_d}]$ we exploit the Koszul complex for $\pi_* (\K_i^{\vee} \otimes V / \K_{i+1})$, since $HQ_d$ is cut out of $Q^Y_d$ by a regular section $s$ of this bundle and thus the Koszul complex gives a finite locally free resolution for $\OO_{HQ_d}$. (Recall that $\pi$ is the projection from $\PP^1 \times \prod_i Q^{Gr(m_i,n)}_{d_i}$ to $\prod_i Q^{Gr(m_i,n)}_{d_i}$, and for each Quot scheme $Q^{Gr(m_i,n)}_{d_i}$, $\K_i$ is the universal subsheaf.)

On $\PP^1 \times \prod_i Q^{Gr(m_i,n)}_{d_i} $, we have
 \begin{equation} 0 \rightarrow \K_i^{\vee} \otimes \K_{i+1} \rightarrow \K_i^{\vee} \otimes V \rightarrow \K_i^{\vee} \otimes V/\K_{i+1} \rightarrow 0.  \end{equation} Push forward to the product of Quot schemes, noting that now $\pi_* \K_i^{\vee} \otimes V/ \K_{i+1}$ is an honest vector bundle:  \begin{equation} 0 \rightarrow \pi_*(\K_i^{\vee} \otimes \K_{i+1}) \rightarrow \pi_*(\K_i^{\vee} \otimes V) \rightarrow \pi_*(\K_i^{\vee} \otimes V/\K_{i+1}) \rightarrow R^1\pi_*(\K_i^{\vee} \otimes \K_{i+1}) \rightarrow 0.  \end{equation} Use this exact sequence to explicitly calculate $\lambda_{-1}^{\C^*} (\pi_* \left(  \oplus_{i=1}^{\ell} \K^{\vee}_i \otimes V/\K_{i+1} \right))$, which happens to given the Koszul complex of $\OO_{HQ_d}$. 
From the sequence,
\begin{equation}\lambda_{-1}^{\C^*}  \left( \left(\pi_* \left(  \oplus_{i=1}^{\ell} \K^{\vee}_i \otimes V/\K_{i+1} \right)\right)^{\vee}\right) = \frac{\lambda_{-1}^{\C^*} ( R^1\pi_*(\K_i^{\vee} \otimes \K_{i+1})^{\vee}) \lambda_{-1}^{\C^*}(\pi_*(\K_i^{\vee} \otimes V)^{\vee})}{ \lambda_{-1}^{\C^*}(\pi_*(\K_i^{\vee} \otimes \K_{i+1})^{\vee})}\end{equation}

Now we compute the contribution of each term separately. Computations take place on the fixed locus of $Q^Y_d$, whose components are indexed by splittings of $d$. First,

\begin{equation} \lambda_{-1}^{\C^*} (\pi_*(\oplus_{i=1}^{\ell -1} \K_i^{\vee} \otimes V)^{\vee}) = \prod_{i=1}^{\ell} \prod_{j=1}^{m_i} \prod_{m=0}^{d_{i,j}} (1- \LL^{\vee}_{i,j}q^m)^n \end{equation} 

Look then at $ R^0\pi_*(\oplus_{i=1}^{\ell} \K_i^{\vee} \otimes \K_{i+1})-R^1\pi_*(\oplus_{i=1}^{\ell} \K_i^{\vee} \otimes \K_{i+1})$. Fiberwise, we can look at this as \begin{equation}H^0 (\PP^1, \oplus_{i=1}^{\ell} \oplus_{j,k} \LL_{i,j} \otimes \LL^{\vee}_{i+1,k} (d_{i,j}-d_{i+1,k}) )-H^1(\PP^1, \oplus_{i=1}^{\ell} \oplus_{j,k} \LL_{i,j} \otimes \LL^{\vee}_{i+1,k} (d_{i,j}-d_{i+1,k}) ) .\end{equation} Use Serre duality: \begin{multline} R^{\bullet}\pi_*(\oplus_{i=1}^{\ell} \K_i^{\vee} \otimes \K_{i+1}) = H^0 (\PP^1, \oplus_{i=1}^{\ell} \oplus_{j,k} \LL_{i,j} \otimes \LL^{\vee}_{i+1,k} (d_{i,j}-d_{i+1,k}) )\\ -H^0(\PP^1, \oplus_{i=1}^{\ell} \oplus_{j,k} \LL^{\vee}_{i,j} \otimes \LL_{i+1,k} (-d_{i,j}+d_{i+1,k}+2) )^{\vee}. \end{multline} For $d_{i,j}-d_{i+1,k} \geq 0$ we get a contribution of \begin{equation} \prod_{i=1}^{\ell} \prod_{j,k} \prod_{m=0}^{d_{i,j}-d_{i+1,k}}(1- \LL^{\vee}_{i,j} \otimes \LL_{i+1,k} q^m).\end{equation} For $d_{i,j} - d_{i+1,k} \leq -2$, we get \begin{equation} \prod_{i=1}^{\ell} \prod_{j,k} \prod_{m=1}^{d_{i+1,k}-d_{i,j}-1}(1- \LL^{\vee}_{i,j} \otimes \LL_{i+1,k} q^m). \end{equation}  For $d_{i,j} - d_{i+1,k} =-1$, we get no contribution. We will see that $\lambda_{-1}^{\C^*}(E^{\vee})$ will pair nicely with this, so for now write the contribution as \begin{equation} \frac{\lambda_{-1}^{\C^*} ( R^1\pi_*(\K_i^{\vee} \otimes \K_{i+1})^{\vee})}{ \lambda_{-1}^{\C^*}(\pi_*(\K_i^{\vee} \otimes \K_{i+1})^{\vee})}= \prod_{i=1}^{\ell} \prod_{1 \leq j \leq m_i, 1 \leq k \leq m_{i+1}} \frac{\prod_{m= - \infty}^{-1} (1- \LL_{i,j}^{\vee} \otimes \LL_{i+1,k} q^m)}{\prod_{m= - \infty}^{d_{i,j}-d_{i+1,k}} (1- \LL_{i,j}^{\vee} \otimes \LL_{i+1,k} q^m)} \end{equation}

The contributions of these terms are then combined to produce
\begin{equation} \alpha^*_F ([\OO_{HQ_d}]) = \prod_{i=1}^{\ell} \prod_{1 \leq j \leq m_i,1 \leq k \leq m_{i+1}} \frac{ \prod_{m=0}^{d_{i,j}} (1- \LL^{\vee}_{i,j}q^m)^n}{(1 - \LL^{\vee}_{i,j} \otimes \LL_{i+1,k} q^{d_{i,j}-d_{i+1,k}})(1 - \LL^{\vee}_{i,j} \otimes \LL_{i+1,k})}, \end{equation} 

Since we are trying to compute  $J^{\Fl,K}_d (q)= k^*g_* \left( \frac{\alpha^*_F [\OO_{Q^X_d}] /g^* \lambda_{-1}^{\C^*} (E^{\vee}) }{\lambda_{-1}^{\C^*} (N^{\vee}_{F/Q^Y_d})} \right) $, we still need $\lambda_{-1}^{\C^*} (E^{\vee})$.  \begin{align} \lambda_{-1}^{\C^*} (E^{\vee}) &= \frac{\lambda_{-1}^{\C^*}((\oplus_{i=1}^{\ell -1} S_i^{\vee} \otimes \C^n)^{\vee} )}{\lambda_{-1}^{\C^*}((\oplus_{i=1}^{\ell} S_i^{\vee} \otimes S_{i+1})^{\vee}) }\\  &= \frac{\lambda_{-1}^{\C^*}(\oplus_{i=1}^{\ell -1}  (\oplus_{j=1}^{m_i} \LL_{i,j} )\otimes \C^n)^{\vee}}{\lambda_{-1}^{\C^*}(\oplus_{i=1}^{\ell -1}  (\oplus_{j=1}^{m_i} \LL_{i,j} )\otimes (\oplus_{k=1}^{m_{i+1}} \LL_{i+1,k}^{\vee})^{\vee}} \\ &=\prod_{i=1}^{\ell} \frac{\prod_{1 \leq j \leq m_i} (1- \LL^{\vee}_{i,j})^n}{\prod_{1 \leq j \leq m_i, 1 \leq k \leq m_{i+1}} (1-\LL^{\vee}_{i,j} \otimes \LL_{i+1,k})} \end{align}

Combine to obtain the numerator: \begin{equation}\label{eq: Aflagguts}\frac{\alpha^*_F([\OO_{HQ_d}])}{\lambda_{-1}^{\C^*} (E^{\vee})} =  \prod_{i=1}^{\ell} \prod_{1 \leq j \leq m_i,1 \leq k \leq m_{i+1}} \frac{ \prod_{m=1}^{d_{i,j}} (1- \LL^{\vee}_{i,j}q^m)^n}{(1 - \LL^{\vee}_{i,j} \otimes \LL_{i+1,k} q^{d_{i,j}-d_{i+1,k}})}.\end{equation}

This needs to be pushed forward to the product of Grassmannians via $g$, along with $\lambda_{-1}^{\C^*} ([N^{\vee}_{F/Q^Y_d}])$. See (\ref{thm: pushf}) for the formula for pushforward from a flag variety to a Grassmannian, which can be easily extended to the product of Grassmannians. See (\ref{eq: FinQYd}) for calculation of $\lambda_{-1}^{\C^*}([N^{\vee}_{F/Q^Y_d}])$. After pushing forward to the product of Grassmannians, pull back by $k^*$; since we're dealing with sums of line bundles, $k^*$ is injective.

 The K-theoretic J-function of the flag variety, then, is given by $\sum_d Q^d J_d^{Fl,K}(q)$ and \begin{multline}J_d^{Fl,K}(q) = \prod_{i=1}^{\ell} (-1)^{(m_i-1) d_i} \prod_{1 \leq k < j \leq m_i}\frac{ (1-\LL^{\vee}_{i,j} \otimes \LL_{i,k} q^{d_{ij}-d_{i,k}})}{ (1- \LL^{\vee}_{i,j} \otimes \LL_{i,k})} \cdot \\ \prod_{1 \leq j \leq m_i, 1 \leq k \leq m_{i+1}} \frac{\prod_{m= - \infty}^0 (1- \LL_{i,j}^{\vee} \otimes \LL_{i+1,k} q^m)}{\prod_{m= - \infty}^{d_{i,j}-d_{i+1,k}} (1- \LL_{i,j}^{\vee} \otimes \LL_{i+1,k} q^m)} .\end{multline} This can be rewritten in a form parallel to that in \cite{BCK2, GToric}:\begin{multline}
J_d^{Fl,K}(q) =\prod_{i=1}^{\ell} (-1)^{(m_i-1) d_i} \prod_{1 \leq k \neq j \leq m_i}\frac{ \prod^{d_{i,k}-d_{i,j}}_{m= - \infty} (1-\LL^{\vee}_{i,j} \otimes \LL_{i,k} q^m)}{\prod^0_{m=-\infty} (1- \LL^{\vee}_{i,j} \otimes \LL_{i,k}q^m)} \cdot \\ \prod_{1 \leq j \leq m_i, 1 \leq k \leq m_{i+1}} \frac{\prod_{m= - \infty}^0 (1- \LL_{i,j}^{\vee} \otimes \LL_{i+1,k} q^m)}{\prod_{m= - \infty}^{d_{i,j}-d_{i+1,k}} (1- \LL_{i,j}^{\vee} \otimes \LL_{i+1,k} q^m)} .\end{multline}\end{proof}

\section{Directions for further research}

Similar techniques apply to Lie groups of other types. Unfortunately, a direct application of the method of proof requires rationality of certain subschemes of the Quot scheme, which is currently not known. The following conjectures, however, follow: 

\begin{conj} The K-theoretic J-function for the Lagrangian complete flag variety, denoted by $L\Fl(1,2, \ldots, n, 2n)$, is \begin{displaymath} J^{L \Fl,K}(Q,q) = \sum_d J^{L \Fl,K}_d(q) Q^d\end{displaymath} where \begin{multline}J^{L\Fl,K}_d = \sum \prod_{i=1}^{n}(-1)^{(i-1)d} \left( \frac{\prod_{1 \leq j<k \leq i} \prod_{m=0}^{d_{j}+d_{k}} (1-\LL^{\vee}_{j} \otimes \LL_{k}^{\vee} q^m)}{\prod_{1 \leq j<k \leq i}  (1-\LL^{\vee}_{j} \otimes \LL_{k}^{\vee} )} \right) \\ \cdot \left( \prod_{1 \leq j < k \leq i} \frac{ (1-\LL_{k}^{\vee}\otimes \LL_{j} q^{d_{k}-d_{j}})}{(1-\LL_{k}^{\vee} \otimes \LL_{j})} \right)\\ \left( \prod_{i=1}^{n-1} \prod_{1 \leq j \leq i,1 \leq k \leq i+1} \frac{1}{(1 - \LL^{\vee}_{j} \otimes \LL_{k} q^{d_{j}-d_{k}})} \right). \end{multline} Here $\LL_i = S^{\vee}_i/S^{\vee}_{i-1}$ and $d_i$ is obtained by pairing the curve class with $\LL_i$. \end{conj}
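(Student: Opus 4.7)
The plan is to mirror the type A flag proof, replacing the hyperquot scheme $HQ_d$ with a Lagrangian analog and quotienting by the Koszul contribution that cuts out $L\Fl$ inside the ordinary type A flag. First, realize $L\Fl(1,2,\ldots,n,2n)$ as the zero scheme of a $T$-equivariant regular section inside the ambient type A complete flag variety: the symplectic form $\omega \in H^0(\wedge^2(\C^{2n})^\vee)$ restricts to a section $s_\omega \in H^0(\wedge^2 S_n^\vee)$ on $\Fl(1,2,\ldots,n,2n)$ whose vanishing is exactly the Lagrangian condition on $V_n$ (isotropy of the smaller $V_i$ being automatic from $V_i \subset V_n$). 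Take $Y=\prod_{i=1}^n Gr(i,2n)$ and $E=\wedge^2 S_n^\vee$, so that $X=L\Fl \subset \Fl \subset Y \subset \PP^N$ satisfies the hypotheses of Lemma \ref{thm: simplelemma}.

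On the Quot side, extend this: let $HQ_d$ denote the hyperquot scheme of the type A flag theorem, and define the Lagrangian hyperquot $LHQ_d \subset HQ_d$ as the zero scheme of the induced section of $\pi_*(\wedge^2 \K_n^\vee)$, where $\K_n$ is the universal subsheaf of rank $n$ over $\PP^1 \times HQ_d$. Assume that $LHQ_d$ has rational singularities, so that $(\dagger)$ of Lemma \ref{thm: simplelemma} holds: $v_*[\OO_{LHQ_d}]=[\OO_d]$. Then Lemma \ref{thm: simplelemma} gives
\begin{equation*}
J^{L\Fl,K}_d(q) = k^*g_*\left(\frac{\alpha_F^*[\OO_{LHQ_d}]/g^*\lambda_{-1}^{\C^*}(E^\vee)}{\lambda_{-1}^{\C^*}(N^\vee_{F/Q^Y_d})}\right).
\end{equation*}

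Next, compute the ingredients. The section cutting out $LHQ_d$ is (conjecturally) regular, so the Koszul complex resolves $\OO_{LHQ_d}$ and $\alpha_F^*[\OO_{LHQ_d}] = \lambda_{-1}^{\C^*}\bigl((R^{\bullet}\pi_*(\wedge^2 \K_n^\vee))^\vee\bigr)$. On a fixed component of $Q^Y_d$ the subsheaf $\K_n$ splits as $\oplus_j \LL_{n,j}(-d_{n,j})$, so $\wedge^2\K_n^\vee$ decomposes as $\oplus_{j<k}\LL_{n,j}^\vee\otimes\LL_{n,k}^\vee(d_{n,j}+d_{n,k})$; since all twists are $\geq 0$, only $R^0\pi_*$ contributes and the $\C^*$-weights on $H^0(\PP^1,\OO(m))$ give
\begin{equation*}
\alpha_F^*[\OO_{LHQ_d}] = \prod_{j<k}\prod_{m=0}^{d_{n,j}+d_{n,k}}\bigl(1-\LL_{n,j}^\vee\otimes\LL_{n,k}^\vee q^m\bigr),
\end{equation*}
which after dividing by $\lambda_{-1}^{\C^*}(E^\vee)=\prod_{j<k}(1-\LL_{n,j}^\vee\otimes\LL_{n,k}^\vee)$ and combining with the already-known type A flag contributions (formula \eqref{eq: FinQYd} for $\lambda_{-1}^{\C^*}(N^\vee_{F/Q^Y_d})$ and the hyperquot Koszul computation of \eqref{eq: Aflagguts}) produces the conjectural numerator. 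Push forward via $g$ using Lemma \ref{thm: pushf} extended to a product of partial flag bundles over Grassmannians, pull back by the injection $k^*$, and reindex so that the Chern roots $\LL_{i,j}$ of $S_i^\vee$ on the complete flag collapse to the $n$ line bundles $\LL_j=S_j^\vee/S_{j-1}^\vee$ used in the conjecture.

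The principal obstacle — and the reason this remains conjectural — is the rationality of $LHQ_d$, i.e.\ verifying $(\dagger)$. The type A proof relied on the known rational-singularities property of $HQ_d$; for the Lagrangian case this would require analyzing the singularities of a symplectic (pfaffian-type) locus inside a product of Quot schemes, and no such result is available in the literature. A secondary but surmountable point is verifying regularity of $s_\omega$ extended to $HQ_d$ so that the Koszul complex actually resolves $\OO_{LHQ_d}$; this should follow by a dimension count and genericity argument on the smooth locus, but interacts nontrivially with the first obstacle. Everything else is a mechanical adaptation of the type A computation.
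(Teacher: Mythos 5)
This statement is stated in the paper as a \emph{conjecture}, with no proof supplied: the paper says explicitly that ``a direct application of the method of proof requires rationality of certain subschemes of the Quot scheme, which is currently not known.'' Your proposal is exactly the derivation the paper has in mind --- run Lemma \ref{thm: simplelemma} with a Lagrangian analogue of the hyperquot scheme, add a Koszul factor for the isotropy condition, push forward via Lemma \ref{thm: pushf} using (\ref{eq: FinQYd}) and (\ref{eq: Aflagguts}) --- and you correctly isolate the same obstruction, namely the condition $(\dagger)$ (rational singularities of $LHQ_d$) together with regularity of the extended section. So your sketch is honest and consistent with the paper's intent, but it is conditional and does not close the gap that keeps the statement a conjecture; you should not present it as a proof, and to your credit you do not.

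There is, however, a concrete mismatch between your computation and the displayed formula that you would need to resolve even granting $(\dagger)$. A dimension count confirms your claim that $L\Fl$ is cut out of the type A flag by a regular section of $\wedge^2 S_n^{\vee}$ alone (rank $\binom{n}{2}$), so your Koszul factor is the single product $\prod_{1 \leq j<k \leq n} \prod_{m=0}^{d_j+d_k}(1-\LL_j^{\vee}\otimes\LL_k^{\vee}q^m) \big/ \prod_{1 \leq j<k \leq n}(1-\LL_j^{\vee}\otimes\LL_k^{\vee})$, indexed only by pairs at the top level. The conjectured formula instead carries such a factor inside $\prod_{i=1}^{n}$ with $1 \leq j<k \leq i$, i.e.\ an isotropy contribution at \emph{every} level of the flag, which is what one would get from the bundle $\oplus_{i=1}^{n}\wedge^2 S_i^{\vee}$ on $\prod_i Gr(i,2n)$. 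But for $n \geq 3$ the corresponding section is not regular (the isotropy of $V_i$ for $i<n$ is implied by the flag conditions and the isotropy of $V_n$, so the codimensions do not add), and the Koszul resolution you invoke would not compute $[\OO_{LHQ_d}]$ for that bundle. Either the conjecture's indexing reflects a different ambient space (e.g.\ a product of isotropic Grassmannians with a different Quot-type compactification) or your choice of cutting bundle must be changed; as written, your ``mechanical adaptation'' produces a formula that differs from the one you set out to derive.
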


\begin{conj} The K-theoretic J-function for the complete flag varieties of types $B$ and $D$ is \begin{displaymath} J^{I \Fl,K}(Q,q) = \sum_d J^{I \Fl,K}_d(q) Q^d\end{displaymath} where \begin{multline}J^{I\Fl,K}_d = \sum_{d_1 + \cdots +d_{n} = d} \prod_{i=1}^{n}(-1)^{(i-1)d} \left( \frac{\prod_{1 \leq j \leq k \leq i} \prod_{m=0}^{d_{j}+d_{k}} (1-\LL^{\vee}_j \otimes \LL_{k}^{\vee} q^m)}{\prod_{1 \leq j \leq k \leq i}  (1-\LL^{\vee}_{j} \otimes \LL_{k}^{\vee} )} \right) \\ \cdot \left( \prod_{1 \leq j < k \leq i} \frac{ (1-\LL_{k}^{\vee}\otimes \LL_{j} q^{d_{k}-d_{j}})}{(1-\LL_{k}^{\vee} \otimes \LL_{j})} \right)\\ \left( \prod_{i=1}^{n-1} \prod_{1 \leq j \leq i,1 \leq k \leq i+1} \frac{1}{(1 - \LL^{\vee}_{j} \otimes \LL_{k} q^{d_{j}-d_{k}})} \right). \end{multline}\end{conj}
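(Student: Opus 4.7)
The plan is to run the strategy of the type A theorem one level deeper, cutting the isotropic flag $I\Fl$ out of an ambient product of ordinary Grassmannians by an additional symmetric bundle and invoking Lemma \ref{thm: simplelemma}. Write $N=2n+1$ (type $B$) or $N=2n$ (type $D$). Via the Pl\"ucker-type embedding
\[ I\Fl \hookrightarrow \prod_{i=1}^{n} Gr(i, N) \hookrightarrow \PP^M, \]
$I\Fl$ is realized as the zero scheme of a regular section of the $T$-equivariant bundle
\[ E = \bigoplus_{i=1}^{n-1} S_i^{\vee} \otimes Q_{i+1} \;\oplus\; \bigoplus_{i=1}^{n} \Sym^2 S_i^{\vee}. \]
The first summand is the type A inclusion bundle enforcing $S_i \subset S_{i+1}$, while the second summand enforces isotropy of each $S_i$ with respect to the symmetric form on $\C^N$.

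Next I would mirror the construction of the hyperquot scheme. On $Q^Y_d = \prod_{i=1}^{n} Q^{Gr(i,N)}_{d_i}$, the bundle $E$ lifts to
\[ \pi_*\Bigl( \bigoplus_{i=1}^{n-1} \K_i^{\vee} \otimes V/\K_{i+1} \Bigr) \;\oplus\; \pi_*\Bigl( \bigoplus_{i=1}^{n} \Sym^2 \K_i^{\vee} \Bigr), \]
whose regular-section zero locus I denote $IHQ_d$, the isotropic hyperquot scheme. Granting the analog of the $(\dagger)$ hypothesis of Section \ref{sec: simlem}, namely $v_*[\OO_{IHQ_d}] = u_*[\OO_{G^{I\Fl}_d}]$, Lemma \ref{thm: simplelemma} reduces the problem to
\[ J^{I\Fl,K}_d(q) = k^*g_*\!\left( \frac{\alpha_F^*[\OO_{IHQ_d}]/g^*\lambda_{-1}^{\C^*}(E^{\vee})}{\lambda_{-1}^{\C^*}(N^{\vee}_{F/Q^Y_d})} \right). \]

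To produce the conjectured formula I would run the Koszul calculation for the new summand in parallel with the type A argument. On a torus-fixed component indexed by a splitting $\K_i \cong \oplus_j \OO(-d_{i,j})$, the splitting principle gives $\Sym^2 \K_i^{\vee} \cong \bigoplus_{1 \leq j \leq k \leq i} \LL_{i,j}^{\vee} \otimes \LL_{i,k}^{\vee}(d_{i,j}+d_{i,k})$. Taking $R^0\pi_* - R^1\pi_*$ with Serre duality and then the equivariant Euler class produces the numerator factor $\prod_{1 \leq j \leq k \leq i}\prod_{m=0}^{d_{i,j}+d_{i,k}}(1-\LL_{i,j}^{\vee}\otimes\LL_{i,k}^{\vee}q^m)$, while the $\Sym^2 S_i^{\vee}$ piece of $\lambda_{-1}^{\C^*}(E^{\vee})$ contributes the matching denominator $\prod_{1 \leq j \leq k \leq i}(1-\LL_{i,j}^{\vee}\otimes\LL_{i,k}^{\vee})$. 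The remaining inter-step and Weyl-group contributions come through verbatim from the type A argument via Lemma \ref{thm: pushf} extended to the product of Grassmannians. Passing to the complete-flag convention that identifies $\LL_{i,j}$ with $\LL_j$ and $d_{i,j}$ with $d_j$ then assembles the conjectural expression.

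The main obstacle, and precisely the reason the statement remains conjectural, is the verification of $(\dagger)$ for $IHQ_d$: one needs $IHQ_d$ to be irreducible with rational singularities, so that its structure sheaf pushes forward to the correct class in $K(Q^Y_d)$. In type A this is supplied by the known geometry of the hyperquot scheme, but in the orthogonal setting the analogous moduli of isotropic flags of subsheaves on $\PP^1$ is not known to be irreducible or to carry rational singularities, nor is it clear that the prescribed section is everywhere regular of the expected codimension. Any proof of the conjecture would require either a direct geometric analysis of this isotropic hyperquot scheme, or a replacement of $[\OO_{IHQ_d}]$ by a virtual $K$-theoretic class with the same pushforward to $Q^Y_d$.
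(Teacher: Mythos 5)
This statement is a conjecture, and the paper offers no proof of it: it states only that ``similar techniques apply'' and that a direct application of the method requires rationality of certain subschemes of the Quot scheme, which is not known. Your sketch reconstructs exactly that intended derivation --- cutting the isotropic flag out of a product of Grassmannians by the type A bundle plus $\bigoplus_i \Sym^2 S_i^{\vee}$, running the Koszul/Serre-duality computation to produce the $j \leq k$ factors, and reducing to Lemma \ref{thm: simplelemma} --- and it correctly isolates the missing $(\dagger)$ hypothesis for the isotropic hyperquot scheme as precisely the gap the paper itself acknowledges.
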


Another natural problem is establishing K-theoretic J-functions for toric varieties, and comparing the K-theoretic J-functions of flag and toric varieties to see whether the abelian-nonabelian correspondence also extends to that situation.

\bibliographystyle{amsalpha}

\bibliography{bigbibliography}

\end{document}